\theoremstyle{definition}
\newtheorem{definition}{Definition}[section]
\newtheorem{example}[definition]{Example}
\newtheorem{remark}[definition]{Remark}
\theoremstyle{plain}
\newtheorem{theorem}[definition]{Theorem}
\newtheorem{lemma}[definition]{Lemma}
\newtheorem{proposition}[definition]{Proposition}
\newtheorem{corollary}[definition]{Corollary}
\numberwithin{equation}{section}
\renewcommand\@makefnmark%
\begin{document}

\title{Unions and ideals of locally strongly porous sets}

\subjclass[]{28A05}
\keywords{local upper porosity, local lower porosity, locally strongly porous set, maximal ideal of locally porous sets}

\author{Maya Altinok}
\address{Department of Mathematics, Faculty of Sciences and Arts, Mersin University, 33343 Mersin, Turkey
}
\email{mayaaltinok@mersin.edu.tr}

\author{Oleksiy Dovgoshey}
\address{Department of Function Theory, Institute of Applied Mathematics and Mechanics of NASU, 84116, Sloviansk, Ukraine}
\email{aleksdov@mail.ru}
\thanks{The second author was partially supported by grant ¹ 15 --- bb$\backslash$19 ``Metric spaces, harmonic analysis of functions and operators, singular and nonclassical problems for differential equations'' of Donetsk National University, Vinnitsa, Ukraine}

\author{Mehmet K\"{u}\c{c}\"{u}kaslan}
\address{Department of Mathematics, Faculty of Sciences and Arts, Mersin University, 33343 Mersin, Turkey}
\email{mkucukaslan@mersin.edu.tr}

\begin{abstract}
For subsets of $\mathbb R^+ = [0,\infty)$ we introduce a notion of coherently porous sets as the sets for which the upper limit in the definition of porosity at a point is attained along the same sequence. We prove that the union of two strongly porous at~$0$ sets is strongly porous if and only if these sets are coherently porous. This result leads to a characteristic property of the intersection of all maximal ideals containing in the family of strongly porous at~$0$ subsets of $\mathbb R^+$. It is also shown that the union of a set $A \subseteq \mathbb R^+$ with arbitrary strongly porous at~$0$ subset of $\mathbb R^+$ is porous at~$0$ if and only if $A$ is lower porous at~$0$.
\end{abstract}

\maketitle

\section{Introduction}

The porosity appeared in the papers of Denjoy \cite{denjoy1}, \cite{denjoy2} and Khintchine \cite{khintchine} and, independently, Dolzenko \cite{dolzenko}. This concept has found interesting applications in the theory of free boundaries \cite{karp}, generalized subharmonic functions \cite{dovgoshey}, complex dynamics \cite{przytycki}, quasisymmetric maps \cite{vaisala}, infinitesimel geometry \cite{bilet} and other areas of mathematics.
\begin{definition}[\cite{thomson}]\label{d1}
Let $E \subseteq \mathbb{R}^+$. The right upper porosity of $E$ at~$0$ is the number
\begin{equation}\label{eq1}
\overline{p} (E)= \limsup_{h \to 0^+} \frac{\lambda (E,h)}{h}
\end{equation}
where $\lambda (E,h)$ is the length of the largest open subinterval of $(0,h)$ that contains no point of $E$.
\end{definition}

The porosity of $E$ at a point $p \in \mathbb{R}^+$ has a standard interpatation as a normalized size of holes in $E$ near $p$.

We will use the following terminology. A set $E \subseteq \mathbb{R}^+$ is:
\begin{itemize}
\item Porous at~$0$ if $\overline{p}(E)> 0$;
\item Strongly porous at~$0$ if $\overline{p}(E)=1$;
\item Nonporous at~$0$ if $\overline{p}(E)= 0$.
\end{itemize}

It should be noted that the standard definitions of porous, strongly porous and nonporous sets use the bilateral porosity at a point instead of the right porosity at a point (see, for example, \cite{thomson}) but the present paper deals only with the right porosity at~$0$ of subsets of $\mathbb R^+$.

For $E \subseteq \mathbb R^+$ denote by $UMP(E)$ the set of all sequences $(h_n)_{n \in \mathbb N}$ of positive real numbers such that $\lim_{n\to\infty}h_n=0$ and
$$
\overline{p}(E) = \lim_{n\to \infty} \frac{\lambda(E,h_n)}{h_n}.
$$
We say that a pair $\{A, B\}$ of subsets of $\mathbb R^+$ is coherently porous if
$$
UMP(A) \cap UMP(B) \neq\varnothing.
$$
More generally we will use the following
\begin{definition}\label{d1.2}
Let $\mathbf{A} = \{A_i: i \in I\}$ be a family of subsets of $\mathbb R^+$. The family $\mathbf{A}$ is coherently porous if
$$
\bigcap_{i \in I} UMP(A_i) \neq \varnothing.
$$
\end{definition}

The second section of the papers contains some properties of coherently porous families of subsets of $\mathbb R^+$. In particular it is shown that the union $A\cup B$ of strongly porous at~$0$ subsets of $\mathbb R^+$ is strongly porous at~$0$ if and only if the pair $\{A, B\}$ is coherently porous, see Corollary~\ref{c2.9}. Theorem~\ref{t2.6} describes the structure of the sets $A \subseteq \mathbb R^+$ for which $\{A, C\}$ is coherently porous for every $C \subseteq \mathbb R^+$.

The third section deals with maximal ideals in the family of all strongly porous at~$0$ sets. It is shown that for any pair $\mathbf{I}_1$, $\mathbf{I}_2$ of distinct maximal ideals there is $A \subseteq \mathbb R^+$ such that $A \in \mathbf{I}_1$ and $\mathbb R^+ - A \in \mathbf{I}_2$, see Theorem~\ref{t3.13}. A set of characteristic properties of the intersection of all maximal ideals of strongly porous at~$0$ sets is given in Theorem~\ref{t3.5}.

The lower porosity at~$0$ is considered in the fourth section. We prove that $A \subseteq \mathbb R^+$ is lower porous at~$0$ if and only if $A \cup B$ is porous at~$0$ for every strongly porous at~$0$ set $B \subseteq \mathbb R^+$, see Theorem~\ref{t4.10}.

\section{Union of strongly porous at zero sets}

The present section deals with the main properties of coherently porous sets. In particular we apply this notion to describe the necessary and sufficient conditions under which the union of strongly porous at~$0$ sets is strongly porous at~$0$.

Some results about finite and countable unions of locally porous sets were obtained by Renfro in \cite{Re}.

The next example shows that the union of two strongly porous at~$0$ sets can be nonporous at~$0$.

\begin{example}
\label{ex1}
Let $(a_n)_{n \in \mathbb{N}}$ and $(b_n)_{n \in \mathbb{N}}$ be some sequences of positive real numbers such that
\begin{equation}
\label{eq10}
a_n > b_n >a_{n+1} >0
\end{equation}
for all $n \in \mathbb{N}$ and
\begin{equation}
\label{eq11}
\lim_{n \to \infty} \frac{b_n}{a_n}= \lim_{n \to \infty} \frac{a_{n+1}}{b_n}=0.
\end{equation}
It follows from \eqref{eq11} that
\begin{equation*}
\lim_{n \to \infty} a_n = \lim_{n \to \infty} b_n=0.
\end{equation*}
Let us define the sets $A,B \subseteq \mathbb{R}^+$ as
\begin{equation*}
A:=\bigcup_{n=1}^{\infty}[b_n,a_n] \quad \text{and}\quad  B:=\bigcup_{n=1}^{\infty}[a_{n+1},b_n].
\end{equation*}
It is clear that $A \cup B= (0, a_1]$. Thus $A \cup B$ is nonporous at~$0$. Using \eqref{eq10}, \eqref{eq11} and \eqref{eq1} we obtain
\begin{eqnarray*}
\overline{p}(A) &\geq& \limsup_{n \to \infty} \frac{\lambda(A,b_n)}{b_n} = \limsup_{n \to \infty} \frac{(b_n-a_{n+1})}{b_n}\\
&=&1- \liminf_{n \to \infty} \frac{a_{n+1}}{b_n}=1.
\end{eqnarray*}
Similarly we can prove that $\overline{p}(B)\geq1$. Hence, $A$ and $B$ are strongly porous at~$0$.
\end{example}

Starting from Example~\ref{ex1} it is easy to prove that every $E \subseteq \mathbb R^+$ is the union of two strongly porous at~$0$ sets.

\begin{proposition}\label{c3}
Let $E$ be a subset of $\mathbb{R}^+$. Then there are subsets $S$ and $Q$ of $\mathbb{R}^+$ such that
\begin{equation*}
E=S\cup Q
\end{equation*}
and $\overline{p}(S)=\overline{p}(Q)=1$.
\end{proposition}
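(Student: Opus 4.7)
The plan is to piggy-back on Example~\ref{ex1}. Fix once and for all sequences $(a_n)_{n \in \mathbb N}$ and $(b_n)_{n \in \mathbb N}$ of positive reals satisfying \eqref{eq10} and \eqref{eq11}, and let $A := \bigcup_{n=1}^{\infty} [b_n, a_n]$ and $B := \bigcup_{n=1}^{\infty} [a_{n+1}, b_n]$ be the two sets built there, so that $A \cup B = (0, a_1]$ and $\overline{p}(A) = \overline{p}(B) = 1$. I then decompose the given set $E$ by setting
\[
S := (E \cap A) \cup (E \setminus (0, a_1]), \qquad Q := E \cap B.
\]
The identity $A \cup B = (0, a_1]$ forces $S \cup Q = E$ at once.

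The computation of $\overline{p}(S)$ and $\overline{p}(Q)$ rests on a one-line monotonicity observation that I would record first: if $F_1 \subseteq F_2 \subseteq \mathbb R^+$, then every open subinterval of $(0,h)$ disjoint from $F_2$ is automatically disjoint from $F_1$, so $\lambda(F_1, h) \ge \lambda(F_2, h)$ for every $h > 0$ and consequently $\overline{p}(F_1) \ge \overline{p}(F_2)$. Applied to $Q \subseteq B$ this yields $\overline{p}(Q) \ge \overline{p}(B) = 1$, hence $\overline{p}(Q) = 1$. For $S$, the extra points in $E \setminus (0, a_1]$ are either equal to $0$ or lie in $(a_1, \infty)$ and therefore contribute nothing to any open subinterval of $(0, h)$ whenever $h \le a_1$; thus $\lambda(S, h) = \lambda(E \cap A, h) \ge \lambda(A, h)$ for all such $h$, and passing to the $\limsup$ as $h \to 0^+$ produces $\overline{p}(S) \ge \overline{p}(A) = 1$.

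There is no serious obstacle here: the entire content of the proposition is absorbed by the decomposition $(0, a_1] = A \cup B$ already established in Example~\ref{ex1}, together with the trivial fact that a subset of a strongly porous at~$0$ set inherits strong porosity at~$0$. The only point needing any attention is the bookkeeping for points of $E$ lying outside $(0, a_1]$, and that is immediate once one notes that such points are bounded away from $0$ and hence invisible to $\overline{p}$.
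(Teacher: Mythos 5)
Your proof is correct and follows essentially the same route as the paper's: both reuse the sets $A$, $B$ from Example~\ref{ex1}, split $E$ along the decomposition $(0,a_1] = A \cup B$, and invoke monotonicity of $\overline{p}$ under taking subsets. The only (immaterial) difference is bookkeeping: you pack the points of $E$ outside $(0,a_1]$ entirely into $S$, whereas the paper defines $S := (\{0\}\cup A)\cap E$ and $Q := (B\cup(a_1,\infty))\cap E$, splitting those extra points between the two pieces.
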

\begin{proof}
Let $A$ and $B$ be subsets of $\mathbb{R}^+$ constructed in Example \ref{ex1}. Write
\begin{equation*}
S:=\big(\{ 0 \} \cup A\big)\cap E~\text{and}~Q:=(B \cup (a_1, \infty)) \cap E.
\end{equation*}
Then we have
$$
S \cup Q = \big( \{0\} \cup (0, a_1] \cup (a_1, \infty) \big) \cap E = \mathbb{R}^+ \cap E = E.
$$
Since $\overline{p}(\{0\} \cup A)=\overline{p}(A)=1$ and $\overline{p}(B \cup (a_1,\infty))=\overline{p}(B)=1$ and
$$
S \subseteq (\{0\} \cup A) \quad \text{and}\quad Q \subseteq B \cup (a_1, \infty),
$$
the equalities $\overline{p}(S)=\overline{p}(Q)=1$ hold.
\end{proof}

\begin{lemma}\label{l2.3}
Let $A$ be a porous at~$0$ subset of $\mathbb{R}^+$ and let
$$
(h_n)_{n \in \mathbb{N}} \in UMP(A).
$$
For every $n \in \mathbb{N}$, denote by $(c_n,d_n)$ the largest open interval in the set $((0,h_n)-A)$. Then the equality
\begin{equation}
\label{eq14}
\lim_{n \to \infty}\frac{d_n}{h_n}=1
\end{equation}
holds.
\end{lemma}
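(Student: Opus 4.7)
The plan is to argue by contradiction: I will suppose that $d_n/h_n$ has a subsequential limit $L\in[0,1)$ and derive a contradiction with the upper-porosity bound on $A$. The key observation on which everything rests is the identity
$$
\lambda(A,d_n)=d_n-c_n=\lambda(A,h_n),
$$
valid because $(c_n,d_n)$ is the largest $A$-free open subinterval of $(0,h_n)$ and $(0,d_n)\subseteq(0,h_n)$: any open $A$-free subinterval of $(0,d_n)$ is also an open $A$-free subinterval of $(0,h_n)$, hence has length at most $d_n-c_n$, while $(c_n,d_n)$ itself realises this length. I will also record the trivial bounds $0<d_n\le h_n$, so that $d_n/h_n\in(0,1]$, and the fact that $d_n\to 0^+$ since $h_n\to 0^+$.

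The hypothesis $(h_n)\in UMP(A)$ gives $\lambda(A,h_n)/h_n=(d_n-c_n)/h_n\to\overline{p}(A)>0$. Suppose now, for contradiction, that $d_n/h_n\not\to 1$, and extract a subsequence $(n_k)$ with $d_{n_k}/h_{n_k}\to L\in[0,1)$. If $L=0$, then
$$
\frac{d_{n_k}-c_{n_k}}{h_{n_k}}\le\frac{d_{n_k}}{h_{n_k}}\longrightarrow 0,
$$
contradicting convergence of the left side to $\overline{p}(A)>0$. If $L\in(0,1)$, then by the identity above,
$$
\frac{\lambda(A,d_{n_k})}{d_{n_k}}=\frac{d_{n_k}-c_{n_k}}{h_{n_k}}\cdot\frac{h_{n_k}}{d_{n_k}}\longrightarrow\frac{\overline{p}(A)}{L}>\overline{p}(A),
$$
which, since $d_{n_k}\to 0^+$, contradicts $\overline{p}(A)=\limsup_{h\to 0^+}\lambda(A,h)/h$. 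Both cases being impossible, $\lim_{n\to\infty} d_n/h_n=1$.

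I do not foresee a serious obstacle: once the identity $\lambda(A,d_n)=\lambda(A,h_n)$ is recognised, the rest is a short dichotomy on the possible behaviour of the ratio $d_n/h_n$. The only mildly delicate point is confirming that the maximiser in $(0,h_n)$ is also a maximiser in $(0,d_n)$, and this follows immediately from the nested inclusion $(0,d_n)\subseteq(0,h_n)$. The positivity $\overline{p}(A)>0$ is essential in the case $L=0$, and this is precisely the hypothesis that $A$ be porous at $0$.
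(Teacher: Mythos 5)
Your argument is correct and follows essentially the same route as the paper's: both rest on the key identity $\lambda(A,d_n)=\lambda(A,h_n)$ and then derive a contradiction by noting that if $d_{n_k}/h_{n_k}\to q<1$ then $\lambda(A,d_{n_k})/d_{n_k}\to\overline{p}(A)/q>\overline{p}(A)$, violating the definition of the upper porosity. The only cosmetic difference is that you split into the cases $L=0$ and $L\in(0,1)$, whereas the paper rules out $L=0$ up front via the inequality $\overline{p}(A)\le\liminf d_n/h_n$ (which follows from $d_n-c_n\le d_n$); these are the same observation packaged slightly differently.
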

\begin{proof}
The inequality $h_n \geq d_n$ holds for every $n \in \mathbb{N}$. Consequently we have
\begin{equation*}
\limsup_{n \to \infty}\frac{d_n}{h_n} \leq 1.
\end{equation*}
If \eqref{eq14} does not hold, then there are $q\in \mathbb{R}^+$ and a subsequence $(h_{n_k})_{k \in \mathbb{N}}$ of the sequence $(h_n)_{n \in \mathbb{N}}$ such that
\begin{equation*}
0 < \overline{p}(A) = \lim_{n\to \infty} \frac{d_n-c_n}{h_n}\leq \liminf_{n \to \infty}\frac{d_n}{h_n} =  \lim_{k \to \infty} \frac{d_{n_k}}{h_{n_k}}=q <1.
\end{equation*}
Since
$$
(c_n,d_n) \subseteq \big((0,d_n)-A \big),
$$
we have $\lambda(A,h_n)= \lambda(A,d_n)$. Consequently
\begin{eqnarray*}
\limsup_{k \to \infty} \frac{\lambda(A,d_{n_k})}{d_{n_k}}&=&  \limsup_{k \to \infty} \frac{h_{n_k}}{d_{n_k}} \frac{\lambda(A,h_{n_k})}{h_{n_k}}\\
&=&\frac{1}{q}\lim_{k \to \infty} \frac{\lambda(A,h_{n_k})}{h_{n_k}} = \frac{1}{q} \overline{p}(A) > \overline{p}(A).
\end{eqnarray*}
The last inequality contradicts the definition of the right upper porosity at~$0$.
\end{proof}

The next lemma is straightforward and we omit its proof.
\begin{lemma}\label{l2.4}
Let $A$ be a subset of $\mathbb R$ and let
$$
(h_n)_{n \in \mathbb N} \in UMP(A).
$$
Then the following statements hold:
\begin{itemize}
\item[(\textit{i})] If $(t_n)_{n \in \mathbb N}$ is a sequence of positive real numbers such that
$$
\lim_{n\to \infty} \frac{t_n}{h_n}=1,
$$
then
$$
(t_n)_{n \in \mathbb N} \in UMP(A);
$$
\item[(\textit{ii})] Every subsequence $(h_{n_k})_{k \in \mathbb N}$ of the sequence $(h_n)_{n \in \mathbb N}$ belongs to $UMP(A)$.
\end{itemize}
\end{lemma}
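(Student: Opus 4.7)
My plan is to treat the two parts separately, with (ii) being essentially a one-line observation and (i) reducing, via Lemma \ref{l2.3}, to the fact that small multiplicative perturbations of $h_n$ cannot destroy the gap located by that lemma.

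Part (ii) is immediate: if $h_n \to 0^+$ and $\lambda(A,h_n)/h_n \to \overline{p}(A)$, then both limits persist along any subsequence, so $(h_{n_k})_{k\in\mathbb{N}} \in UMP(A)$.

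For part (i), I would first note that $t_n \to 0^+$ follows from $h_n\to 0^+$ and $t_n/h_n\to 1$, whence by the very definition of $\overline{p}(A)$ as a $\limsup$ at $0^+$ we obtain $\limsup_{n\to\infty}\lambda(A,t_n)/t_n \le \overline{p}(A)$. If $\overline{p}(A)=0$ this already yields $(t_n)\in UMP(A)$, so assume $\overline{p}(A)>0$, which allows Lemma \ref{l2.3} to be invoked. Let $(c_n,d_n)$ be the largest open subinterval of $(0,h_n)-A$. Then $(d_n-c_n)/h_n=\lambda(A,h_n)/h_n\to \overline{p}(A)$ and, by Lemma \ref{l2.3}, $d_n/h_n\to 1$; hence $c_n/h_n\to 1-\overline{p}(A)<1$. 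Since $t_n/h_n\to 1$, we get $c_n<t_n$ for all sufficiently large $n$.

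I would then split into two subcases. If $t_n\ge d_n$, the whole interval $(c_n,d_n)$ sits inside $(0,t_n)-A$, giving
\[
\frac{\lambda(A,t_n)}{t_n}\;\ge\;\frac{d_n-c_n}{h_n}\cdot\frac{h_n}{t_n}\;\longrightarrow\;\overline{p}(A).
\]
If $c_n<t_n<d_n$, then $(c_n,t_n)\subset (c_n,d_n)$ is disjoint from $A$, and
\[
\frac{\lambda(A,t_n)}{t_n}\;\ge\;\frac{t_n-c_n}{t_n}\;=\;1-\frac{c_n}{h_n}\cdot\frac{h_n}{t_n}\;\longrightarrow\;1-(1-\overline{p}(A))\;=\;\overline{p}(A).
\]
Either way, $\liminf_{n\to\infty}\lambda(A,t_n)/t_n\ge \overline{p}(A)$, and combined with the upper bound this gives equality, so $(t_n)\in UMP(A)$.

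There is no real obstacle here; the only point requiring care is that $\lambda(A,\cdot)$ is only monotone, not continuous, so one cannot simply pass from $h_n$ to $t_n$ by inserting a supremum. Lemma \ref{l2.3} circumvents this by pinpointing where the near-extremal gap lies relative to $h_n$, after which a short case split on the sign of $t_n-d_n$ finishes the argument.
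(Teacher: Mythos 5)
The paper gives no proof of Lemma~\ref{l2.4}: the authors simply state that it is straightforward and omit the argument. So there is no ``paper's route'' to compare against; I can only assess your proof on its own terms.

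Your proof is correct. Part~(\textit{ii}) is indeed immediate, as you say. For part~(\textit{i}), the reduction to $\overline{p}(A)>0$, the invocation of Lemma~\ref{l2.3} to pin down the right endpoint $d_n$ of the maximal gap, the deduction $c_n/h_n\to 1-\overline{p}(A)<1$ (hence $c_n<t_n$ eventually), and the two-case estimate on $\lambda(A,t_n)/t_n$ all go through; together with the automatic upper bound $\limsup_n\lambda(A,t_n)/t_n\le\overline{p}(A)$ (from $t_n\to 0^+$) this forces the limit to exist and equal $\overline{p}(A)$. One remark: Lemma~\ref{l2.3} is actually a bit more than you need. You only require that $c_n/h_n$ be eventually bounded away from $1$, and this follows directly from $c_n = d_n - \lambda(A,h_n)\le h_n - \lambda(A,h_n)$, i.e.\ $c_n/h_n \le 1-\lambda(A,h_n)/h_n\to 1-\overline{p}(A)$, without knowing $d_n/h_n\to 1$. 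That shortcut makes the argument genuinely ``straightforward'' in the sense the authors presumably intended; your route through Lemma~\ref{l2.3} is heavier but equally valid. Also, your final sentence (``Either way, $\liminf\ge\overline{p}(A)$'') could be stated a touch more carefully since the case split depends on $n$, but the quantitative estimates you wrote down do yield the claimed $\liminf$ bound.
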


We will sometimes use this lemma without any references.

\begin{lemma}\label{l2.5}
Let $A$ be a subset of $\mathbb{R}^+$ with
\begin{equation}
\label{eq23}
0< \overline{p}(A) <1
\end{equation}
and let $(h_n)_{n \in \mathbb N} \in UMP(A)$. Then there is a sequence $((q_n,t_n))_{n \in \mathbb N}$ of open intervals such that every $(q_n,t_n)$ is a connected component of the set $Int(\mathbb{R}^+-A)$ and
\begin{equation}
\label{eq25}
\lim_{n\to \infty}\frac{t_n}{h_n}=1.
\end{equation}
\end{lemma}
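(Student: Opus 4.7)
The plan is to construct $(q_n, t_n)$ directly from Lemma~\ref{l2.3}: let $(c_n, d_n)$ be the largest open interval in $(0, h_n) - A$, note that $(c_n, d_n) \subseteq Int(\mathbb R^+ - A)$ because it is open and disjoint from $A$, and define $(q_n, t_n)$ to be the connected component of $Int(\mathbb R^+ - A)$ containing it. Then $q_n \leq c_n$ and $t_n \geq d_n$, so Lemma~\ref{l2.3} immediately yields $\liminf_{n \to \infty} t_n/h_n \geq 1$.

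The substance of the proof is the reverse inequality $\limsup_{n \to \infty} t_n/h_n \leq 1$. I would argue by contradiction, assuming there exist $\varepsilon > 0$ and a subsequence along which $t_n/h_n \geq 1 + \varepsilon$. The first step along this subsequence is to check that $t_n \to 0$. Indeed, if $t_n \geq \delta > 0$ on a further subsequence, then since $q_n \leq c_n < h_n \to 0$, for any fixed $h \in (0,\delta)$ one would have $(h_n, h) \subseteq (q_n, t_n) \subseteq \mathbb R^+ - A$ for large $n$; letting $n \to \infty$ gives $\lambda(A, h) = h$ for every $h \in (0, \delta)$ and hence $\overline p(A) = 1$, contradicting \eqref{eq23}.

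With $t_n \to 0$ secured, the definition of $\overline p(A)$ provides $\lambda(A, t_n)/t_n \leq \overline p(A) + o(1)$. I would then combine the two estimates
$$
t_n - q_n \leq \lambda(A, t_n), \qquad t_n - q_n \geq (t_n - h_n) + \lambda(A, h_n),
$$
where the second uses $q_n \leq c_n$ together with $h_n - c_n \geq d_n - c_n = \lambda(A, h_n)$. Dividing by $h_n$, letting $n \to \infty$ along the subsequence, and recalling $\lambda(A, h_n)/h_n \to \overline p(A)$ from $(h_n) \in UMP(A)$, one arrives at an inequality of the form $(t_n/h_n)(1 - \overline p(A)) \leq (1 - \overline p(A)) + o(1)$. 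Since $1 - \overline p(A) > 0$ by \eqref{eq23}, this forces $t_n/h_n \leq 1 + o(1)$, contradicting $t_n/h_n \geq 1 + \varepsilon$.

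The main obstacle is the possibility that $d_n = h_n$, in which case the enclosing component $(q_n, t_n)$ can extend past $h_n$ and is no longer directly controlled by Lemma~\ref{l2.3}. The hypothesis $\overline p(A) < 1$ intervenes twice to handle this: once to prevent $t_n$ from staying bounded away from $0$, and again to close the final inequality for the ratio $t_n/h_n$.
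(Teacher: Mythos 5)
Your proof is correct and follows essentially the same route as the paper's: both start from Lemma~\ref{l2.3}, take $(q_n,t_n)$ to be the component enclosing the maximal gap $(c_n,d_n)$, observe that $t_n\to 0$ (which lets one apply the $\limsup$ definition of $\overline p(A)$ to the sequence $(t_n)$), and then compare $\lambda(A,t_n)$ with $\lambda(A,h_n)$ to force the ratio to $1$ using $\overline p(A)<1$. The only real difference is bookkeeping (the paper works with $s=\liminf d_n/t_n$ and an exact identity for $\lambda(A,t_{n_k})/t_{n_k}$, you work with $\limsup t_n/h_n$ and inequalities), and you make explicit the step $t_n\to 0$ that the paper merely asserts.
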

\begin{proof}
For every $n \in \mathbb{N}$, denote by $(c_n,d_n)$ the largest open interval in the set $(0,h_n)-A$ and, respectively, by $(q_n,t_n)$ the connected component of $Int(\mathbb R^+-A)$ which satisfies
\begin{equation}
\label{eq26}
(q_n,t_n) \supseteq (c_n,d_n).
\end{equation}
It follows from \eqref{eq23} that
\begin{equation}
\label{eq27}
\lim_{n\to \infty} t_n=0.
\end{equation}
Hence there is $n_0\in \mathbb{N}$ such that $t_n < \infty$ if $n \geq n_0$. By Lemma \ref{l2.3} the limit relation
\begin{equation}
\label{eq28}
\lim_{n\to \infty}\frac{d_n}{h_n}=1
\end{equation}
holds. Thus it suffices to show that
\begin{equation}
\label{eq29}
\lim_{n \to \infty}\frac{d_n}{t_n}=1.
\end{equation}
Inclusion \eqref{eq26} implies $d_n \leq t_n$ for every $n \in \mathbb{N}$. Consequently
\begin{equation*}
\limsup_{n \to \infty}\frac{d_n}{t_n}\leq 1.
\end{equation*}
Let $\{n_k\}_{k \in \mathbb{N}}$ be a sequence on natural numbers for which
\begin{equation}
\label{eq30}
s:= \liminf_{n \to \infty}\frac{d_n}{t_n}=\lim_{k \to \infty}\frac{d_{n_k}}{t_{n_k}}.
\end{equation}
Since $c_n=q_n$ holds for every $n\in \mathbb{N}$, we have
\begin{eqnarray*}
\frac{\lambda(A,t_{n_k})}{t_{n_k}} &=& \frac{t_{n_k}-q_{n_k}}{t_{n_k}} = \frac{t_{n_k}-d_{n_k}}{t_{n_k}} + \frac{d_{n_k}-c_{n_k}}{t_{n_k}}\\
&=& \Big( 1-\frac{d_{n_k}}{t_{n_k}} \Big)+ \frac{d_{n_k}}{t_{n_k}} \Big(\frac{d_{n_k}-c_{n_k}}{d_{n_k}}\Big)\\
&=& \Big( 1-\frac{d_{n_k}}{t_{n_k}} \Big) + \frac{d_{n_k}}{t_{n_k}} \frac{h_{n_k}}{d_{n_k}} \frac{\lambda(A,h_{n_k})}{h_{n_k}}
\end{eqnarray*}
for all sufficiently large $k\in \mathbb{N}$. Now \eqref{eq23}, \eqref{eq28} and \eqref{eq30} imply
\begin{equation*}
\overline{p}(A)\geq \limsup_{k \to \infty}\frac{\lambda(A,t_{n_k})}{t_{n_k}}=(1-s)+s\overline{p}(A).
\end{equation*}
Note that the inequalities
$$
\overline{p}(A) \geq (1-s)+s \overline{p}(A)
$$
and
\begin{equation}
\label{eq31}
\overline{p}(A)(1-s) \geq (1-s)
\end{equation}
are equivalent. From $0 < \overline{p}(A) <1$ and $1-s \geq 0$ it follows that inequality \eqref{eq31} holds if and only if $s=1$. Thus
\begin{equation*}
1= \liminf_{n\to \infty} \frac{d_n}{t_n} \leq \limsup_{n\to \infty} \frac{d_n}{t_n}\leq 1.
\end{equation*}
Limit relation \eqref{eq29} follows.
\end{proof}

\begin{remark}
The conclusion of Lemma~\ref{l2.5} is false for strongly porous at~$0$ sets $A$. It can be shown that $\overline{p}(A)=1$ if and only if $(sh_n)_{n\in \mathbb N} \in UMP(A)$ for all $(h_n)_{n\in \mathbb N} \in UMP(A)$ and $s \in (0,1)$.
\end{remark}

\begin{theorem}\label{t2.6}
Let $A$ be a subset of $\mathbb{R}^+$. Then the following statements are equivalent.
\begin{itemize}
\item[(\textit{i})] $A$ is nonporous at~$0$ or
\begin{equation*}
0 \notin \overline{A-\{0\}},
\end{equation*}
where $\overline{A-\{0\}}$ is the closure of the set $A-\{0\}$.
\item[(\textit{ii})] Every sequence $(h_n)_{n \in \mathbb N}$ of positive real numbers with $\underset{n \to \infty}{\lim}h_n=0$ belongs to $UMP(A)$.
\item[(\textit{iii})] The pair $\{A,C\}$ is coherently porous for each $C \subseteq \mathbb{R}^+$.
\end{itemize}
\end{theorem}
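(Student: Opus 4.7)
The plan is to prove the cycle $(i) \Rightarrow (ii) \Rightarrow (iii) \Rightarrow (i)$, with the last implication by contraposition. The first two are routine; the third is the substance of the theorem.

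For $(i) \Rightarrow (ii)$: handle the two clauses separately. If $\overline{p}(A)=0$, the $\limsup$ in \eqref{eq1} equals the limit $0$, so every positive $h_n \to 0$ satisfies $\lambda(A,h_n)/h_n \to 0 = \overline{p}(A)$. If $0 \notin \overline{A-\{0\}}$, fix $\varepsilon>0$ with $A \cap (0,\varepsilon) = \varnothing$; then $\lambda(A,h)=h$ for $h<\varepsilon$, so $\overline{p}(A)=1$ and $\lambda(A,h_n)/h_n = 1$ eventually. For $(ii) \Rightarrow (iii)$: given any $C \subseteq \mathbb{R}^+$, extract a sequence from $UMP(C)$ (non-empty since $\overline{p}(C)$ is a $\limsup$); by (ii) it belongs to $UMP(A)$ as well, so $\{A,C\}$ is coherently porous.

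For $(iii)\Rightarrow(i)$, argue contrapositively: assume $p:=\overline{p}(A)>0$ and $0\in\overline{A-\{0\}}$, and construct $C$ with $UMP(A)\cap UMP(C)=\varnothing$. The key elementary bound is that for every $a\in A$ and every $h>a$,
\[
\lambda(A,h) \leq \max\bigl(\lambda(A,a),\, h-a\bigr),
\]
since any gap of $A$ inside $(0,h)$ must lie entirely in $(0,a)$ or entirely in $(a,h)$. Using $0 \in \overline{A-\{0\}}$, pick a strictly decreasing sequence $(a_k)\subseteq A$ with $a_k \to 0^+$ and (after extracting) $a_{k+1}/a_k \to 0$. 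Fix $\delta\in(0,p)$. Since $p=\limsup$ one has $\lambda(A,a_k)/a_k\leq p+o(1)$, and as $\delta<p$ the bound applied with $h=(1+\delta)a_k$ gives
\[
\frac{\lambda(A,(1+\delta)a_k)}{(1+\delta)a_k} \leq \frac{p+o(1)}{1+\delta} \longrightarrow \frac{p}{1+\delta} < p,
\]
hence $((1+\delta)a_k)_k\notin UMP(A)$. Set $C:=\{(1+\delta)a_k:k\in\mathbb{N}\}$; the calculation of Example~\ref{ex1} yields $\overline{p}(C)=1$ and $((1+\delta)a_k)_k\in UMP(C)$.

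The remaining step is to show $UMP(A)\cap UMP(C)=\varnothing$ entirely. Describing $UMP(C)$ by imitating the proof of Lemma~\ref{l2.5} (with $c_k=(1+\delta)a_k$), any $(h_n)\in UMP(C)$ satisfies, for $h_n\in(c_{k+1},c_k]$, either $h_n/c_{k+1}\to 1^+$ (Case II) or $h_n/c_{k+1}\to\infty$ (Case I). Case II is dispatched directly by the elementary bound at $a=a_{k+1}$: $\lambda(A,h_n)/h_n\to p/(1+\delta)<p$. Case I subdivides by the limit of $h_n/a_k$; the step I expect to be the main obstacle is the borderline subcase where $h_n$ is asymptotic to $a_{k(n)}\in A$. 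I would handle it by either refining $C$ (e.g.\ inserting a second point near each $a_k$ to destroy this family of sequences in $UMP(C)$) or splitting on $\overline{p}(A)=1$ versus $\overline{p}(A)<1$ and invoking respectively the scaling invariance of $UMP(A)$ (remark following Lemma~\ref{l2.5}) and the structural characterisation in Lemma~\ref{l2.5} itself.
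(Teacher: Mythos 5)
Your reduction to the contrapositive of $(iii)\Rightarrow(i)$ and the first two implications are fine and match the paper. The problem is the construction of $C$ in the last step, and the gap you flag as the ``main obstacle'' is in fact fatal to the construction as given, not merely a technicality to be patched.

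Concretely, take $A=\{a_k:k\in\mathbb N\}$ with $a_{k+1}/a_k\to 0$; then $\overline p(A)=1$ and $0\in\overline{A-\{0\}}$, so $(i)$ fails and $(iii)$ must fail too, i.e.\ some $C$ with $UMP(A)\cap UMP(C)=\varnothing$ must exist. But your $C=\{(1+\delta)a_k\}$ does not do the job: since $a_{k+1}/a_k\to 0$ one computes $\lambda(C,a_k)/a_k=(a_k-(1+\delta)a_{k+1})/a_k\to 1$, so $(a_k)\in UMP(C)$; and since $A=\{a_k\}$ one has $\lambda(A,a_k)/a_k=(a_k-a_{k+1})/a_k\to 1=\overline p(A)$, so $(a_k)\in UMP(A)$ as well. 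Hence $UMP(A)\cap UMP(C)\neq\varnothing$. The same sequence defeats the ``insert a second point'' refinement (replacing $a_k$ by $(1-\delta')a_k$ just shifts the offending test sequence). The root cause is structural: your $C$ has $\overline p(C)=1$, so Lemma~\ref{l2.5} is unavailable and $UMP(C)$ is too large --- it contains, among others, sequences that sit at the points $a_k\in A$, which can trivially realize $\overline p(A)$.

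The paper avoids exactly this by arranging $\overline p(C)$ to be strictly less than $1$. Instead of blowing up points of $A$, it picks gaps $(a_n,b_n)$ of $A$ with $(b_n)\in UMP(A)$ and $\lambda(A,b_n)=b_n-a_n$, sets $c_n=\frac{2a_nb_n}{a_n+b_n}$ (harmonic mean, so that $\frac{c_n-a_n}{c_n}=\frac12\frac{b_n-a_n}{b_n}$), and defines $C=\mathbb R^+-\bigcup(a_n,c_n)$. Then $\overline p(C)=\frac12\overline p(A)\in(0,\tfrac12]$, and Lemma~\ref{l2.5} applies to $C$: every $(h_k)\in UMP(C)$ is asymptotic to a subsequence $(c_{n_k})$ of the right endpoints $c_n$. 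A common sequence in $UMP(A)\cap UMP(C)$ then forces $\overline p(A)=\lim\lambda(A,c_{n_k})/c_{n_k}=\frac12\overline p(A)$, a contradiction. Your elementary bound $\lambda(A,h)\le\max(\lambda(A,a),h-a)$ for $a\in A$, $a<h$, is correct and useful for discarding some sequences from $UMP(A)$, but it cannot substitute for a structural control of $UMP(C)$, which is what Lemma~\ref{l2.5} supplies and what your choice $\overline p(C)=1$ forfeits. To repair the proof you would need to redesign $C$ so that $\overline p(C)<1$ and $UMP(C)$ is pinned down, which essentially leads back to the paper's construction.
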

\begin{proof}
(\textit{i}) $\Rightarrow$ (\textit{ii}) Let (\textit{i}) hold. If $A$ is nonporous at~$0$, then for every sequence $(h_n)_{n \in \mathbb{N}}$ of positive real numbers with $\underset{n\to \infty}{\lim}h_n=0$ we have
\begin{equation*}
\overline{p}(A) \geq \limsup_{n \to \infty} \frac{\lambda(A,h_n)}{h_n} \geq \liminf_{n \to \infty} \frac{\lambda(A,h_n)}{h_n} \geq 0= \overline{p}(A).
\end{equation*}
Thus $(h_n)_{n\in \mathbb N} \in UMP(A)$.

Similarly, $0 \notin \overline{A-\{0\}}$ if and only if there is $t>0$ such that
$$
(0,t)\cap A=\varnothing.
$$
The last equality implies that $\overline{p}(A)=1$ and
\begin{equation*}
\lim_{n \to \infty} \frac{\lambda(A,h_n)}{h_n}=1
\end{equation*}
for every sequence $(h_n)_{n \in \mathbb{N}}$ of positive real numbers with $\underset{n \to \infty}{\lim}h_n=0$. The implication (\textit{i}) $\Rightarrow$ (\textit{ii}) follows.

(\textit{ii}) $\Rightarrow$ (\textit{iii}) Suppose that condition (\textit{ii}) holds. Let $C \subseteq \mathbb{R}^+$ and let $(h_n)_{n \in \mathbb{N}} \in UMP(C)$. Then $(ii)$ implies
$$
(h_n)_{n \in \mathbb{N}} \in UMP(A).
$$
Thus the pair $\{A,C\}$ is coherently porous.

(\textit{iii}) $\Rightarrow$ (\textit{i}) Let (\textit{iii}) hold. Suppose that (\textit{i}) does not hold, i.e., $0 \in \overline{A-\{0\}}$ and $\overline{p}(A) > 0$. We can find a sequence $\big((a_n,b_n)\big)_{n \in \mathbb{N}}$ of open intervals in $\mathbb{R}^+-A$ such that $(b_n)_{n \in \mathbb N} \in UMP(A)$ and
\begin{equation}\label{eq32}
\lambda(A,b_n)=b_n-a_n.
\end{equation}
\begin{equation}\label{eq33}
\lim_{n \to \infty}\frac{a_{n+1}}{b_n}=0 \text{ and } b_{n+1} <a_n < b_n
\end{equation}
for every $n \in \mathbb{N}$. The inequality $a_n < b_n$ implies that the point
\begin{equation}
\label{eq34}
c_n:=\frac{2b_na_n}{b_n+a_n}
\end{equation}
belongs to the interval $(a_n,b_n)$,
\begin{equation}
\label{eq35}
a_n <c_n <b_n.
\end{equation}
Moreover, from \eqref{eq34} it follows that
\begin{equation}
\label{eq36}
\frac{c_n-a_n}{c_n}=\frac{1}{2}\frac{b_n-a_n}{b_n}.
\end{equation}
Let us define a set $C \subseteq \mathbb{R}^+$ as
\begin{equation}
\label{eq37}
C := \mathbb{R}^+- \Big( \bigcup_{n=1}^{\infty}(a_n,c_n) \Big).
\end{equation}
It follows from \eqref{eq33} and \eqref{eq35} that
\begin{equation}
\label{eq38}
\lambda(C,c_n)=c_n-a_n.
\end{equation}
holds for all sufficiently large $n$. Since every bounded connected component of $Int(\mathbb{R}^+-C)$ has the form $(a_n,c_n)$ for some $n \in \mathbb N$, equalities \eqref{eq38}, \eqref{eq36} and \eqref{eq32} imply
\begin{eqnarray}\label{eq2.19}
\overline{p}(C)&=&\limsup_{n \to \infty}\frac{\lambda(C,c_n)}{c_n}\\
&=&\lim_{n\to \infty} \frac{c_n-a_n}{c_n}=\frac{1}{2}\lim_{n \to \infty}\frac{b_n-a_n}{b_n} =\frac{1}{2} \overline{p}(A). \nonumber
\end{eqnarray}
It follows from $0<\overline{p}(A)\leq 1$  that
\begin{equation*}
0< \overline{p}(C)\leq \frac{1}{2}.
\end{equation*}
Let $(h_k)_{k \in \mathbb{N}} \in UMP(C)$. By Lemma \ref{l2.5} there is a subsequence $(c_{n_k})_{k \in \mathbb{N}}$ of $(c_n)_{n \in \mathbb{N}}$ such that
\begin{equation*}
\lim_{k \to \infty}\frac{c_{n_k}}{h_k}=1.
\end{equation*}
By statement (\textit{iii}) the pair $\{A,C\}$ is coherently porous. Consequently
\begin{equation}\label{eq2.20}
\overline{p}(A)=\lim_{k \to \infty}\frac{\lambda(A,c_{n_k})}{c_{n_k}}.
\end{equation}
Using \eqref{eq33} we see that $(a_{n_k},c_{n_k})$ is the largest interval in the set
\begin{equation*}
(\mathbb{R}^+-A)\cap (0,c_{n_k})
\end{equation*}
for all sufficiently large $k$. Hence we have
\begin{equation*}
\lambda(A,c_{n_k})=c_{n_k}-a_{n_k}
\end{equation*}
for all sufficiently large $k$. Now from \eqref{eq2.19} and \eqref{eq2.20} we obtain
\begin{equation*}
\overline{p}(A)=\lim_{k\to \infty} \frac{c_{n_k}-a_{n_k}}{c_{n_k}}=\frac{1}{2}\overline{p}(A).
\end{equation*}
The equality
$$
\overline{p}(A)=\frac{1}{2} \overline{p}(A)
$$
is valid if and only if
\begin{equation*}
\overline{p}(A)=0.
\end{equation*}
It contradicts the condition $\overline{p}(A) > 0$.
\end{proof}

\begin{lemma}\label{l2.7}
Let $A_1,\ldots, A_n$, $n\geq 2$, be subsets of $\mathbb{R}^+$ with
$$
\overline{p}(A_i) \in (0,1]
$$
for every $i=1,\ldots,n$. If the family $\{A_1,\ldots, A_n\}$ is coherently porous, then the equality
\begin{equation}\label{eq2.21}
\overline{p}(A_1\cup \ldots\cup A_n)= \min\{ \overline{p}(A_1),\ldots, \overline{p}(A_n) \}
\end{equation}
holds.
\end{lemma}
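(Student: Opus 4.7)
The plan is to prove the equality by a two-sided inequality. The inequality $\overline{p}(A_1\cup\cdots\cup A_n)\le \min_i\overline{p}(A_i)$ is generic (it needs no coherence hypothesis): for every $h>0$, any open subinterval of $(0,h)$ disjoint from $A_1\cup\cdots\cup A_n$ is also disjoint from each individual $A_i$, so $\lambda(A_1\cup\cdots\cup A_n,h)\le \lambda(A_i,h)$ for every $i$. Dividing by $h$ and taking $\limsup_{h\to 0^+}$ yields $\overline{p}(A_1\cup\cdots\cup A_n)\le \overline{p}(A_i)$ for each $i$, hence the upper bound.

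For the reverse inequality, I would exploit coherent porosity to fix a single sequence
$$
(h_k)_{k\in\mathbb N}\in \bigcap_{i=1}^{n}UMP(A_i),
$$
and for each $i$ let $(c_k^{(i)},d_k^{(i)})$ be the largest open interval in $(0,h_k)-A_i$. Since $\overline{p}(A_i)>0$, Lemma~\ref{l2.3} gives $d_k^{(i)}/h_k\to 1$; combined with $(d_k^{(i)}-c_k^{(i)})/h_k\to\overline{p}(A_i)$, this forces $c_k^{(i)}/h_k\to 1-\overline{p}(A_i)$. The open interval
$$
\bigcap_{i=1}^{n}(c_k^{(i)},d_k^{(i)})=\bigl(\max_i c_k^{(i)},\,\min_i d_k^{(i)}\bigr)
$$
is disjoint from every $A_i$, hence from $A_1\cup\cdots\cup A_n$. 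Since the family $\{A_1,\dots,A_n\}$ is finite,
$$
\lim_{k\to\infty}\frac{\min_i d_k^{(i)}-\max_i c_k^{(i)}}{h_k}=1-\bigl(1-\min_i\overline{p}(A_i)\bigr)=\min_i\overline{p}(A_i),
$$
so this intersection is a nonempty open subinterval of $(0,h_k)-(A_1\cup\cdots\cup A_n)$ for all sufficiently large $k$, and its length provides a lower estimate for $\lambda(A_1\cup\cdots\cup A_n,h_k)$. Passing to the limit on $k$ gives $\overline{p}(A_1\cup\cdots\cup A_n)\ge \min_i\overline{p}(A_i)$, which combined with the upper bound yields \eqref{eq2.21}.

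The only real obstacle is ensuring that the intersection of the ``large'' gaps in the various $A_i$ stays comparable in size to each individual gap. This is precisely the role of coherent porosity: without a common sequence $(h_k)$ the endpoints $d_k^{(i)}$ could be pushed away from $h_k$, and Lemma~\ref{l2.3} would not apply uniformly. With the common sequence, finiteness of $n$ and Lemma~\ref{l2.3} together pin $\min_i d_k^{(i)}/h_k\to 1$, so the intersection does not collapse and the lower bound is delivered exactly at the level $\min_i\overline{p}(A_i)$.
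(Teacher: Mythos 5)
Your argument is correct and follows essentially the same path as the paper: fix a common witness sequence $(h_k)\in\bigcap_{i}UMP(A_i)$, apply Lemma~\ref{l2.3} to force the right endpoints $d_k^{(i)}$ of the maximal gaps up to $h_k$, and use the resulting overlap of the intervals $(c_k^{(i)},d_k^{(i)})$ to exhibit a single large gap for the union. The only (cosmetic) difference is that you compute the limit of $(\min_i d_k^{(i)}-\max_i c_k^{(i)})/h_k$ directly via $c_k^{(i)}/h_k\to 1-\overline p(A_i)$, whereas the paper introduces $d_k:=\min_i d_k^{(i)}$, shows $(\varepsilon d_k,d_k)$ lies in every gap for $\varepsilon\in(1-\overline p_0,1)$ and large $k$, and then lets $\varepsilon\to 1-\overline p_0$; your version is a mild streamlining of the same estimate.
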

\begin{proof}
Let $\{A_1,\ldots, A_n\}$ be coherently porous. The inclusions
$$
(A_1\cup \ldots \cup A_n) \supseteq A_i, \quad i = 1,\ldots,n,
$$
give us the inequality
$$
\overline{p}(A_1\cup \ldots \cup A_n) \leq \min\{ \overline{p}(A_1),\ldots, \overline{p}(A_n) \}.
$$
Hence it suffices to show that
\begin{equation}\label{eq2.22}
\overline{p}(A_1\cup \ldots \cup A_n) \geq \min\{ \overline{p}(A_1),\ldots, \overline{p}(A_n) \}.
\end{equation}
Since $\{A_1,\ldots, A_n\}$ is coherently porous, there exists
$$
(h_k)_{k \in \mathbb N} \in \bigcap_{i=1}^n UMP(A_i).
$$
For all $k \in \mathbb N$ and $i \in \{1,\ldots,n\}$ denote by $(c_k^i, d_k^i)$ the largest open interval in the set $(0, h_k)-A_i$. Write
\begin{equation}\label{eq2.23}
d_k:=\min\{d_k^1, \ldots, d_k^n\}.
\end{equation}
Lemma~\ref{l2.3} implies that
$$
\lim_{k\to \infty} \frac{d_k^i}{h_k} = 1
$$
for every $i \in \{1, \ldots, n\}$. Consequently we have
$$
\lim_{k\to \infty} \frac{d_k}{h_k} = 1.
$$
By Lemma~\ref{l2.4} it follows that
\begin{equation}\label{eq2.24}
(d_k)_{k \in \mathbb N} \in \bigcap_{i=1}^n UMP(A_i).
\end{equation}
Let us denote by $i_0$ an index for which
\begin{equation}\label{eq2.25}
\overline{p}(A_{i_0}) = \min\{\overline{p}(A_1),\ldots, \overline{p}(A_n)\}.
\end{equation}
Let $\overline{p}_0 := \overline{p}(A_{i_0})$ and $\varepsilon \in (1- \overline{p}_0, 1)$. Using \eqref{eq2.23}, \eqref{eq2.24} and \eqref{eq2.25} we can prove that there is $k_0 \in \mathbb N$ such that the inclusion $$
(\varepsilon d_k, d_k) \subseteq (c_k^i, d_k^i)
$$
holds for every $i \in \{1,\ldots,n\}$ if $k \geq k_0$. Consequently
$$
\overline{p}(A_1\cup \ldots \cup A_n) \geq \limsup_{k\to \infty} \frac{\lambda(A_1\cup \ldots \cup A_n, d_k)}{d_k} \geq \limsup_{k\to \infty} \frac{d_k - \varepsilon d_k}{d_k} = 1-\varepsilon
$$
for every $\varepsilon \in (1- \overline{p}_0, 1)$. Letting $\varepsilon$ to $(1-\overline{p}_0)$ we obtain \eqref{eq2.22}.
\end{proof}

\begin{theorem}\label{t4}
Let $A_1,...A_n$ be subsets of $\mathbb{R}^+$. Suppose there is a number $p \in [0,1]$ such that
\begin{equation}\label{eq22}
\overline{p}(A_1)=...=\overline{p}(A_n)=p.
\end{equation}
Then the following two statements are equivalent.
\begin{itemize}
\item[(\textit{i})] The family $\{A_1,\ldots, A_n\}$ is coherently porous.
\item[(\textit{ii})] The equality
\begin{equation*}
\overline{p}(A_1\cup\ldots\cup A_n)=p
\end{equation*}
holds.
\end{itemize}
\end{theorem}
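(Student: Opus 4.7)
The plan is to split into the trivial case $p=0$ and the substantive case $p>0$, and in the latter to invoke Lemma~\ref{l2.7} for one direction and use monotonicity of $\lambda$ under inclusion for the other.

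\medskip

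\textbf{Case $p=0$.} If every $A_i$ is nonporous at $0$, then by Theorem~\ref{t2.6} every sequence $(h_k)$ of positive reals with $h_k\to 0$ lies in $UMP(A_i)$, so the intersection $\bigcap_{i=1}^n UMP(A_i)$ is nonempty and (i) holds automatically. On the other hand, from $A_i\subseteq A_1\cup\dots\cup A_n$ we get $\lambda(A_1\cup\dots\cup A_n,h)\le \lambda(A_i,h)$, hence $\overline{p}(A_1\cup\dots\cup A_n)\le\overline{p}(A_i)=0$, so (ii) holds as well. Thus both statements are true when $p=0$, so they are equivalent.

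\medskip

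\textbf{Case $p\in(0,1]$, direction (\textit{i})$\Rightarrow$(\textit{ii}).} This is immediate from Lemma~\ref{l2.7}: its hypotheses are satisfied, and
$$
\overline{p}(A_1\cup\dots\cup A_n)=\min\{\overline{p}(A_1),\ldots,\overline{p}(A_n)\}=p.
$$

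\medskip

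\textbf{Case $p\in(0,1]$, direction (\textit{ii})$\Rightarrow$(\textit{i}).} Since $\overline{p}(A_1\cup\dots\cup A_n)=p$ is attained as a $\limsup$ along some sequence $h_k\to 0^+$, the set $UMP(A_1\cup\dots\cup A_n)$ is nonempty; pick $(h_k)$ in it. For each $i$, the inclusion $A_i\subseteq A_1\cup\dots\cup A_n$ gives $\lambda(A_i,h_k)\ge\lambda(A_1\cup\dots\cup A_n,h_k)$, so
$$
\liminf_{k\to\infty}\frac{\lambda(A_i,h_k)}{h_k}\ge\lim_{k\to\infty}\frac{\lambda(A_1\cup\dots\cup A_n,h_k)}{h_k}=p.
$$
Combined with $\limsup_{k\to\infty}\lambda(A_i,h_k)/h_k\le\overline{p}(A_i)=p$, this forces the limit to equal $p=\overline{p}(A_i)$, i.e.\ $(h_k)\in UMP(A_i)$. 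Since this holds for every $i$, we get $(h_k)\in\bigcap_{i=1}^n UMP(A_i)$, establishing coherent porosity of the family.

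\medskip

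There is no real obstacle here; the argument is essentially bookkeeping on top of Lemma~\ref{l2.7} and Theorem~\ref{t2.6}. The only point requiring a moment's care is checking that $UMP$ of the union is nonempty in the direction (\textit{ii})$\Rightarrow$(\textit{i}), which follows at once from the fact that any $\limsup$ as $h\to 0^+$ is realised along some sequence tending to $0^+$.
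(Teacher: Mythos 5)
Your proposal is correct and follows essentially the same route as the paper: both cases are handled identically, with the $(i)\Rightarrow(ii)$ direction delegated to Lemma~\ref{l2.7} and the $(ii)\Rightarrow(i)$ direction using the same squeeze argument showing any $(h_k)\in UMP(A_1\cup\dots\cup A_n)$ lies in every $UMP(A_i)$. The only cosmetic difference is that you spell out the $p=0$ case (both statements hold unconditionally by Theorem~\ref{t2.6} and set inclusion) where the paper simply says it ``directly follows from Theorem~\ref{t2.6}.''
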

\begin{proof}
The case $p=0$ directly follows from Theorem~\ref{t2.6}.

Let $p \in (0, 1]$ and let $\{A_1, \ldots, A_n\}$ be coherently porous. Then by Lemma~\ref{l2.7} we have
$$
\overline{p}(A_1\cup\ldots \cup A_n) = \min \{\overline{p}(A_1), \ldots, \overline{p}(A_n)\} = p.
$$
The implication $(i) \Rightarrow (ii)$ holds.

Let $\overline{p}(A_1\cup\ldots \cup A_n) = p$. To prove $(ii) \Rightarrow (i)$ it suffices to show that
\begin{equation}\label{eq21}
(h_k)_{k \in \mathbb N} \in \bigcap_{i=1}^n UMP(A_i)
\end{equation}
holds if
\begin{equation}\label{eq20}
(h_k)_{k \in \mathbb N} \in UMP(A_1\cup\ldots \cup A_n).
\end{equation}
Suppose $(h_k)_{k \in \mathbb N}$ satisfies \eqref{eq20}. Then we have
$$
p = \lim_{k\to \infty}\frac{\lambda(A_1\cup\ldots \cup A_n, h_k)}{h_k} \leq \liminf_{k\to \infty}\frac{\lambda(A_i, h_k)}{h_k}
$$
for every $i \in \{1, \ldots, n\}$. Let $1\leq i \leq n$. Since
$$
\liminf_{k\to \infty}\frac{\lambda(A_i, h_k)}{h_k} \leq \limsup_{k\to \infty}\frac{\lambda(A_i, h_k)}{h_k} \leq p,
$$
there exists $\lim_{k\to \infty}\frac{\lambda(A_i, h_k)}{h_k}$ such that
$$
\lim_{k\to \infty}\frac{\lambda(A_i, h_k)}{h_k} = p = \overline{p}(A_i).
$$
Thus the statement
$$
(h_k)_{k \in \mathbb N} \in UMP(A_i)
$$
holds for every $i \in \{1, \ldots, n\}$. Statement \eqref{eq21} follows.
\end{proof}

\begin{corollary}\label{c2.9}
Let $A_1,...,A_n$ be strongly porous at~$0$ subsets of $\mathbb{R}^+$. Then the union $\cup_{j=1}^{n} A_j$ is strongly porous at~$0$ if and only if the family $\{A_1, \ldots, A_n\}$ is coherently porous.
\end{corollary}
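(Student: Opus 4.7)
The plan is to deduce this corollary directly from Theorem~\ref{t4}, specialising to the case $p=1$.

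First I would observe that the hypothesis that each $A_j$ is strongly porous at~$0$ means precisely that $\overline{p}(A_j)=1$ for all $j\in\{1,\ldots,n\}$. Thus the common-porosity hypothesis of Theorem~\ref{t4} is satisfied with $p=1$.

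Next I would note that since $A_j \subseteq A_1\cup\ldots\cup A_n$ for each $j$, the monotonicity of $\overline{p}$ (which follows immediately from the definition of $\lambda(\cdot, h)$) gives $\overline{p}(A_1\cup\ldots\cup A_n) \geq \overline{p}(A_j) = 1$, while the trivial upper bound $\overline{p}(E) \leq 1$ holds for every $E\subseteq\mathbb{R}^+$. Consequently, the union is strongly porous at~$0$ if and only if $\overline{p}(A_1\cup\ldots\cup A_n)=1$, i.e.\ if and only if condition~(\textit{ii}) of Theorem~\ref{t4} holds for $p=1$. Applying the equivalence $(\textit{i})\Leftrightarrow(\textit{ii})$ from Theorem~\ref{t4} then yields the desired equivalence with coherent porosity of $\{A_1,\ldots,A_n\}$.

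Since the work is entirely absorbed into Theorem~\ref{t4}, there is no genuine obstacle; the only thing to be careful about is the routine verification that strong porosity of the union is actually the statement $\overline{p}(A_1\cup\ldots\cup A_n)=1$, which amounts to the monotonicity remark above. The proof is therefore essentially a one-line invocation of Theorem~\ref{t4}.
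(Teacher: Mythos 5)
Your overall plan is right, and it is the one the paper intends: since strongly porous at~$0$ means by definition $\overline{p}=1$, the hypothesis of Theorem~\ref{t4} holds with $p=1$, and condition~(\textit{ii}) of that theorem (``$\overline{p}(A_1\cup\cdots\cup A_n)=1$'') is verbatim the statement that the union is strongly porous at~$0$; the corollary is then the $(i)\Leftrightarrow(ii)$ equivalence. The paper gives no separate argument, so you have reproduced its route.

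However, the monotonicity remark you inserted is both wrong in direction and unnecessary. If $A\subseteq B$ then every subinterval of $(0,h)$ avoiding $B$ also avoids $A$, so $\lambda(B,h)\leq\lambda(A,h)$ and hence $\overline{p}(B)\leq\overline{p}(A)$: porosity \emph{decreases} as the set grows. Thus $A_j\subseteq A_1\cup\cdots\cup A_n$ gives $\overline{p}(A_1\cup\cdots\cup A_n)\leq\overline{p}(A_j)=1$, not $\geq 1$. Had your inequality been correct, $\overline{p}$ of the union would always equal~$1$, making the ``only if'' direction vacuous and contradicting Example~\ref{ex1}. Fortunately you never actually use this claim: the identification of ``$\cup_j A_j$ is strongly porous at~$0$'' with ``$\overline{p}(\cup_j A_j)=1$'' is just the definition, and no monotonicity is needed. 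Drop that paragraph and the proof is a clean one-line specialisation of Theorem~\ref{t4}.
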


In the case of coherently porous $\mathbf{A} = \{A_n: n \in \mathbb N\}$ the union $\cup_{n\in \mathbb N} A_n$ can be nonporous even if every $A_n$ is strongly porous at~$0$.

\begin{proposition}\label{p2.10}
For every $A \subseteq \mathbb R^+$ there is a countable coherently porous family $\mathbf{A} = \{A_n: n \in \mathbb N\}$ of strongly porous at~$0$ sets such that
\begin{equation}\label{p2.10eq1}
A = \bigcup_{n\in \mathbb N} A_n.
\end{equation}
\end{proposition}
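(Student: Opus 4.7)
The plan is to partition $A$ by cutting along a sequence decreasing to $0$, producing countably many pieces each bounded away from $0$ (with the possible exception of the single point $0$ itself in one piece). Such pieces are automatically strongly porous at~$0$ and are mutually coherently porous, because by Theorem~\ref{t2.6} any set $B$ with $0 \notin \overline{B - \{0\}}$ has $UMP(B)$ equal to the collection of \emph{all} sequences of positive reals converging to $0$; in particular every such $B$ is a ``universal donor'' to any coherently porous family, no matter which common witnessing sequence has been fixed in advance.

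Concretely, I would fix any strictly decreasing sequence $(h_k)_{k \in \mathbb N}$ of positive reals with $\lim_{k\to\infty} h_k = 0$; for definiteness take $h_k = 2^{-k}$. Define
$$
A_1 := A \cap \bigl(\{0\} \cup [h_1, \infty)\bigr), \qquad A_n := A \cap (h_n, h_{n-1}] \ \text{ for } \ n \geq 2.
$$
Since $\{0\} \cup [h_1, \infty) \cup \bigcup_{n \geq 2}(h_n, h_{n-1}] = \mathbb R^+$, the decomposition $A = \bigcup_{n \in \mathbb N} A_n$ is immediate.

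For each $n \in \mathbb N$, the nonzero part $A_n - \{0\}$ is contained in a set of the form $[h_m, \infty)$ with $h_m > 0$ (namely $m=1$ when $n=1$ and $m=n$ otherwise), hence $0 \notin \overline{A_n - \{0\}}$. Theorem~\ref{t2.6} then yields that every sequence of positive reals converging to $0$ belongs to $UMP(A_n)$; in particular $\overline{p}(A_n) = 1$ and $(h_k)_{k \in \mathbb N} \in UMP(A_n)$. Consequently $\{A_n : n \in \mathbb N\}$ is a countable family of strongly porous at~$0$ sets, and the common sequence $(h_k)_{k \in \mathbb N}$ lies in $\bigcap_{n \in \mathbb N} UMP(A_n)$, witnessing coherent porosity per Definition~\ref{d1.2}.

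I do not anticipate a genuine obstacle: the content of the proposition is essentially absorbed by Theorem~\ref{t2.6}. The only care needed is to make sure that each piece of the partition is bounded away from $0$, and any sequence $(h_k)$ tending to $0$ through positive values accomplishes this — the specific rate of decrease plays no role.
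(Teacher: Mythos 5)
Your proof is correct and takes essentially the same approach as the paper: cut $A$ along a sequence tending to $0$ so that each piece (apart from possibly the single point $0$) is bounded away from the origin, then invoke Theorem~\ref{t2.6}(i)$\Rightarrow$(ii) to conclude that every such piece absorbs all sequences tending to $0$ into its $UMP$, hence the family is coherently porous and each piece is strongly porous at~$0$. The only cosmetic difference is that you use an essentially disjoint decomposition $A_n = A \cap (h_n, h_{n-1}]$ whereas the paper uses the nested sets $A_n = A \cap (\{0\}\cup(1/n,\infty))$; both work for identical reasons.
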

\begin{proof}
For $A \subseteq \mathbb R^+$ and $n \in \mathbb N$ define the set $A_n$ as
$$
A_n:= A \cap \left(\{0\} \cup \left(\frac{1}{n}, \infty\right)\right).
$$
Then \eqref{p2.10eq1} is evident and $\mathbf{A} = \{A_n: n \in \mathbb N\}$ is coherently porous by Theorem~\ref{t2.6}.
\end{proof}

The next proposition collects together some basic properties of the binary relation ``be coherently porous".

\begin{proposition}\label{p2.11}
Let $A$ and $B$ be subsets of $\mathbb{R}^+$.
\begin{itemize}
\item[(\textit{i})] The pair $\{A,B\}$ is coherently porous if and only if the pair $\{B,A\}$ is coherently porous.
\item[(\textit{ii})] The pair $\{A,A\}$ is coherently porous.
\item[(\textit{iii})] If $A$ strongly porous at~$0$, then the family $2^A$ of all subsets of $A$ is coherently porous.
\item[(\textit{iv})] If $A$ and $B$ are strongly porous at~$0$, then there exists $C \subseteq \mathbb R^+$ such that $C$ is strongly porous at~$0$ and the pairs $\{A,C\}$ and $\{C,B\}$ are coherently porous.
\end{itemize}
\end{proposition}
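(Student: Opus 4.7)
Statements (i) and (ii) are immediate: the intersection $UMP(A)\cap UMP(B)$ is symmetric in $A$ and $B$, and the definition of $\overline{p}(A)$ as a $\limsup$ guarantees $UMP(A) \neq \varnothing$, so $\{A,A\}$ is coherently porous. For (iii), I would pick any $(h_n) \in UMP(A)$, so that $\lambda(A, h_n)/h_n \to 1$. For every $B \subseteq A$ the inclusion $\mathbb{R}^+ - B \supseteq \mathbb{R}^+ - A$ yields the monotonicity $\lambda(B, h_n) \geq \lambda(A, h_n)$, and combined with $\lambda(B, h_n) \leq h_n$ this forces $\lambda(B, h_n)/h_n \to 1$; hence $\overline{p}(B) = 1$ and $(h_n) \in UMP(B)$. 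Consequently the same single sequence $(h_n)$ lies in $\bigcap_{B \in 2^A} UMP(B)$, which is the desired conclusion.

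The main obstacle is (iv). My plan is to construct $C$ as a countable discrete set whose accumulation at $0$ is governed by two interleaved witnessing sequences, one for $A$ and one for $B$. I would pick $(s_n) \in UMP(A)$ and $(t_n) \in UMP(B)$; both tend to $0$. By an inductive extraction I would select subsequences $h_k := s_{n_k}$ and $g_k := t_{m_k}$ satisfying
\begin{equation*}
h_1 > g_1 > h_2 > g_2 > \ldots > 0, \qquad \frac{g_k}{h_k} < 2^{-k}, \qquad \frac{h_{k+1}}{g_k} < 2^{-k},
\end{equation*}
which is possible because $s_n, t_n \to 0$. By Lemma~\ref{l2.4}(ii), $(h_k) \in UMP(A)$ and $(g_k) \in UMP(B)$.

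Now set $C := \{h_k : k \in \mathbb{N}\} \cup \{g_k : k \in \mathbb{N}\}$. For large $k$, the points of $C$ contained in $(0, h_k)$ are precisely $g_k, h_{k+1}, g_{k+1}, \ldots$; among the resulting gaps, $(g_k, h_k)$ has length $h_k - g_k$, while every other gap lies in $(0, g_k)$ and so has length at most $g_k$. Since $g_k/h_k \to 0$, the interval $(g_k, h_k)$ is the largest for all sufficiently large $k$, giving $\lambda(C, h_k)/h_k = 1 - g_k/h_k \to 1$; this shows $\overline{p}(C) = 1$ and $(h_k) \in UMP(C)$. The symmetric argument in $(0, g_k)$ yields $(g_k) \in UMP(C)$. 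Hence $(h_k) \in UMP(A) \cap UMP(C)$ and $(g_k) \in UMP(B) \cap UMP(C)$, so $C$ is strongly porous at~$0$ and both pairs $\{A, C\}$ and $\{C, B\}$ are coherently porous. The only delicate verification is the identification of $(g_k, h_k)$ as the largest gap in $(0, h_k) - C$, which is secured by the rapid decay $g_k/h_k < 2^{-k}$ enforced in the inductive choice.
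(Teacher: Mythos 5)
Parts (i), (ii) and (iii) are correct and follow essentially the same reasoning the paper sketches (the paper routes (iii) through the proof of Theorem~\ref{t4}(ii)$\Rightarrow$(i), which is exactly your monotonicity argument $\lambda(B,h_n)\geq\lambda(A,h_n)$). For (iv), however, your route is genuinely different from the paper's. The paper takes $C:=A\cap B$: since $A\cap B$ is a subset of the strongly porous set $A$ it is itself strongly porous, and statement (iii) then immediately gives that $\{A,A\cap B\}$ and $\{B,A\cap B\}$ are coherently porous, so (iv) is a two-line corollary of (iii). You instead build a fresh countable set $C=\{h_k\}\cup\{g_k\}$ by interleaving rapidly decaying subsequences drawn from $UMP(A)$ and $UMP(B)$, then verify directly that $(h_k)\in UMP(A)\cap UMP(C)$ and $(g_k)\in UMP(B)\cap UMP(C)$; the verification is correct, and the bounds $g_k/h_k<2^{-k}$, $h_{k+1}/g_k<2^{-k}$ do secure that $(g_k,h_k)$ (resp.\ $(h_{k+1},g_k)$) is the maximal gap below $h_k$ (resp.\ $g_k$). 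Your construction is self-contained and, as a bonus, always produces a nonempty $C$ accumulating at $0$, whereas $A\cap B$ may well be empty (though $\varnothing$ is still strongly porous, so that is no obstruction for the paper). On the other hand the paper's proof is far shorter and cleanly exhibits (iv) as a consequence of (iii); your argument in effect reproves a weakened form of (iii) from scratch for the set $C$ rather than reusing it.
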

\begin{proof}
Statements $(i)$ and $(ii)$ follow directly from Definition~\ref{d1.2}.

(\textit{iii}) The proof of this statement is similar to the proof of implication $(ii) \Rightarrow (i)$ in Theorem~\ref{t4}.

(\textit{iv}) Let $A$ and $B$ be strongly porous at~$0$. Then we have
\begin{equation*}
A \cap B \subseteq A~\text{and}~ A\cap B \subseteq B.
\end{equation*}
Using statement (\textit{iii}) we obtain that $\big\{A,(A \cap B)\big\}$ and $\big\{B,(A \cap B)\big\}$ are coherently porous. Statement (\textit{iv}) follows.
\end{proof}

In the rest of this section we describe an interesting link between strongly porous at~$0$ sets and the graph theory.

Write $A \asymp B$ if the pair $\{A,B\}$ is coherently porous and $A\neq B$. Let us denote by $\mathcal{SP}$ the set of all strongly porous at~$0$ subsets of $\mathbb{R}^+$.

Let us define a graph $G = (V, E)$ with the vertex set $V = \mathcal{SP}$ and the edge set $E$ such that vertices $A$, $B \in V$ are adjacent if and only if $A \asymp B$. Then statement $(iv)$ of Proposition~\ref{p2.11} means that the diameter of $G$ is $2$, i.e., for every two $X$, $Y \in V$ there is $Z \in V$ such that $X$, $Z$ are adjacent and $Z$, $Y$ are also adjacent.

The graphs of diameter $2$ have nice combinatorial properties which can be reformulated in the language of porosity of sets. An example of such a reformulation is given below.

\begin{proposition}\label{p2.12}
Let $\mathbf{A} = \{A_1, \ldots, A_n\}$ be a family of strongly porous at~$0$ sets and let $n = d^2$ for some integer number $d \geq 2$. Suppose that for every pair of distinct $A_i$, $A_j \in \mathbf{A}$ there is $A_k \in \mathbf{A}$ such that $A_i \asymp A_k$ and $A_k \asymp A_j$, for every $A_i \in \mathbf{A}$, the number of $A_j \in \mathbf{A}$ with $A_i \asymp A_j$ is at most $d$, and there is $A_{i_0}$ such that the number of $A_i \in \mathbf{A}$ with $A_{i_0} \asymp A_i$ is $d$. Then $d=4$ and the elements of $\mathbf{A}$ can be numerated such that
\begin{enumerate}
\item[$(i)$] $(A_i \asymp A_j)$ holds if and only if $i=j \pmod 2$.
\end{enumerate}
\end{proposition}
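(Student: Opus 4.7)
The plan is to translate the hypotheses of Proposition~\ref{p2.12} into a statement about the induced subgraph $H = G[\mathbf{A}]$ and then combine a local double-count around a maximum-degree vertex with a global double-count over all pairs. The hypotheses become: $|V(H)| = n = d^{2}$, $\Delta(H)\le d$, there is a vertex of degree exactly $d$, and every pair of distinct vertices of $H$ has a common neighbour inside $V(H)$.

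First I would fix a vertex $v_0 = A_{i_0}$ of degree exactly $d$, write $N = N_H(v_0) = \{v_1,\ldots,v_d\}$, and put $U = V(H)\setminus(\{v_0\}\cup N)$, so that $|U| = d^{2} - d - 1$. The common-neighbour hypothesis applied to each pair $\{v_0,u\}$ with $u\in U$ forces every $u\in U$ to be adjacent to some $v_i$. Counting the edges incident to $N$ yields
\[
2\,e(N) + e(N,U) + d \;=\; \sum_{i=1}^{d}\deg_H v_i \;\le\; d^{2},
\]
while the coverage condition gives $e(N,U)\ge |U| = d^{2}-d-1$, so $2\,e(N)\le 1$; hence $N$ is independent and $\sum_{i}\deg_H v_i \in \{d^{2}-1,\,d^{2}\}$.

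Next I would run the global double-count: every unordered pair $\{x,y\}\subseteq V(H)$ has a common neighbour, so
\[
\sum_{w\in V(H)}\binom{\deg_H w}{2}\;\ge\;\binom{n}{2}.
\]
Substituting $\deg_H w \le d$ and $n = d^{2}$ turns this into a numerical relation that, together with the local rigidity of Step~1 (near-equality is forced, and the common-neighbour condition must also be checked pair by pair inside $U$), pins down $d = 4$ after a short case analysis on the admissible degree sequences. Once $d = 4$ is in hand, every inequality above holds with equality, so $H$ is $4$-regular on $16$ vertices with exactly one common neighbour for every pair; iterating the neighbourhood analysis then splits $V(H)$ into two classes that realise the parity labelling in $(i)$.

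The main obstacle is the elimination forcing $d = 4$: the raw counting inequality admits a range of values of $d$, so one has to exploit the structural rigidity coming from the common-neighbour condition applied inside $U$ and the integrality of the relevant intersection counts, in the same spirit as the Hoffman--Singleton argument that rules out all but $d\in\{2,3,7,57\}$ for Moore graphs of diameter $2$, in order to eliminate $d\in\{2,3\}$ and $d\ge 5$.
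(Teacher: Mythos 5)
The paper does not prove Proposition~\ref{p2.12} at all: it reformulates the hypotheses as saying the adjacency graph on $\mathbf{A}$ has diameter~$2$, maximum degree~$d$, and $d^{2}$ vertices, and then simply cites Erd\H{o}s, Fajtlowicz and Hoffman~\cite{EFH} for the resulting structure theorem. Your proposal instead tries to re-derive that theorem from scratch, which is a genuinely different and much harder route, and it is not closed: you yourself flag the elimination step that should force a single admissible value of $d$ as an ``obstacle'' and only gesture at a Hoffman--Singleton-style spectral or parity argument without carrying it out. Until that step is supplied, the proposal proves nothing beyond the Moore bound.

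There is also a concrete flaw earlier in the sketch, coming from reading the hypothesis literally so that \emph{every} pair, adjacent or not, must have a common neighbour. Under that reading your own Step~1 already refutes the hypotheses for every $d\geq 2$: you correctly deduce $e(N)=0$, so $N=N_H(v_0)$ is independent, but the common-neighbour requirement for the adjacent pair $\{v_0,v_i\}$ with $v_i\in N$ forces a common neighbour lying inside $N$, so $N$ cannot be independent. Step~2 reaches the same contradiction directly: from $\sum_{w}\binom{\deg_H w}{2}\geq\binom{n}{2}$, $\deg_H w\leq d$ and $n=d^{2}$ one gets $d(d-1)\geq d^{2}-1$, i.e.\ $d\leq 1$. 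The intended hypothesis must be diameter~$\leq 2$, that is, a common neighbour is required only for \emph{non-adjacent} pairs. Under that reading Step~1 is fine, but Step~2's global double count then only yields the Moore bound $n\leq d^{2}+1$ (replace $\binom{n}{2}$ by $\binom{n}{2}-|E(H)|$), which does not isolate $d$; the actual work of~\cite{EFH} is left undone.
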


The corresponding result for arbitrary finite graph of diameter $2$ is proved in~\cite{EFH}.

\begin{example}
Let $A$ and $B$ be the sets from Example~\ref{ex1} and let $a \in A$ and $b \in B$. Write
$$
A_1:=A, \quad A_2:=B, \quad A_3:=A - \{a\} \quad\text{and}\quad A_4:=B - \{b\}.
$$
Then the family
$$
\mathbf{A} :=\{A_1, A_2, A_3, A_4\}
$$
yields condition~$(i)$ from Proposition~\ref{p2.12}.
\end{example}

\section{Maximal ideals in $\mathcal{SP}$.}

As in the preceding section, $\mathcal{SP}$ denotes the set of all strongly porous at~$0$ subsets of $\mathbb R^+$. The main goal of the section is to describe the set of subsets $E \subseteq \mathbb R^+$ for which $\{E, A\}$ is coherently porous for every strongly porous at~$0$ set $A$ on maximal ideals language. Moreover, it is shown that for every two distinct maximal ideals $\mathbf{I}_1$, $\mathbf{I}_2 \subseteq \mathcal{SP}$ there are $A_1 \in \mathbf{I}_1$ and $A_2 \in \mathbf{I}_2$ such that $A_1 \cup A_2 = \mathbb R^+$.

\begin{definition}\label{d3.1}
A nonempty collection $\mathbf{I}$ of subsets of a set $X$ is an ideal on $X$ if the following conditions are valid:
\begin{enumerate}
\item[($i$)] The implication
\begin{equation}\label{eq3.1}
(B \in \mathbf{I}\ \&\ C \subseteq B) \Rightarrow (C \in \mathbf{I})
\end{equation}
holds for all sets $C$ and $B$;
\item[($ii$)] $B \cup C \in \mathbf{I}$ for all $B$, $C \in \mathbf{I}$;
\item[($iii$)] $X \notin \mathbf{I}$.
\end{enumerate}
\end{definition}

A collection $\mathbf{I} \subseteq 2^X$ is said to be closed under subsets if statement $(i)$ of Definition~\ref{d3.1} is valid. It is clear that the set $\mathcal{SP}$ is closed under subsets but, as Example~\ref{ex1} shows, it is not an ideal on $\mathbb R^+$.

\begin{definition}\label{d3.2}
Let $\mathbf{\Gamma} \subseteq 2^X$ be nonempty and closed under subsets. An ideal $\mathbf{I}$ on X is $\mathbf{\Gamma}$--maximal if $\mathbf{I} \subseteq \mathbf{\Gamma}$ and the implication
\begin{equation}\label{eq3.2}
(\mathbf{I} \subseteq \mathbf{J} \subseteq \mathbf{\Gamma}) \Rightarrow (\mathbf{I} = \mathbf{J})
\end{equation}
holds for every ideal $\mathbf{J}$ on $X$.
\end{definition}

It is clear that the intersection of any nonempty family of ideals on $X$ is also an ideal on $X$. Write $M(\mathbf{\Gamma})$ for the set of all $\mathbf{\Gamma}$--maximal ideals and define an ideal $\hat{I}(\mathbf{\Gamma})$ as
\begin{equation}\label{eq3.3}
\hat{I}(\mathbf{\Gamma}) := \bigcap_{\mathbf{I} \in M(\mathbf{\Gamma})} \mathbf{I}.
\end{equation}

In what follows we describe some properties of the ideal $\hat{I}(\mathcal{SP})$ which was introduced in~\cite{BDP}. The following lemma is a particular case of Theorem~4.4 from~\cite{BDP}.

\begin{lemma}\label{l3.3}
Let $\mathbf{\Gamma} \subseteq 2^{\mathbb R^+}$ be closed under subsets, let
$$
\mathbb R^+= \bigcup_{A \in \mathbf{\Gamma}} A
$$
and let $B \subseteq \mathbb{R}^+$. Then $B \in \hat{I}(\mathbf{\Gamma})$ if and only if $B \cup E \in \mathbf{\Gamma}$ holds for every $E \in \mathbf{\Gamma}$.
\end{lemma}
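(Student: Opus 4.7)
The plan is to prove both implications by constructing suitable ideals from $B$ and the given data. I will use freely that every ideal contains $\emptyset$ (being nonempty and closed under subsets), and I will assume $\mathbb{R}^{+}\notin\mathbf{\Gamma}$ throughout, as the opposite case forces $\mathbf{\Gamma}=2^{\mathbb{R}^{+}}$ and makes the equivalence degenerate.

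For the implication $(\Leftarrow)$, I would fix a maximal $\mathbf{I}\in M(\mathbf{\Gamma})$ and consider the ideal generated by $\mathbf{I}\cup\{B\}$, namely
\[
\mathbf{I}':=\{C\subseteq\mathbb{R}^{+}:C\subseteq B\cup D\text{ for some }D\in\mathbf{I}\}.
\]
The bookkeeping checks are that $\mathbf{I}'$ is closed under subsets and under finite unions (since $(B\cup D_{1})\cup(B\cup D_{2})=B\cup(D_{1}\cup D_{2})$), and that $\mathbf{I}'\subseteq\mathbf{\Gamma}$, because every $C\in\mathbf{I}'$ satisfies $C\subseteq B\cup D$ with $B\cup D\in\mathbf{\Gamma}$ by hypothesis, and $\mathbf{\Gamma}$ is closed under subsets. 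The one genuine check is $\mathbb{R}^{+}\notin\mathbf{I}'$: if it were, then $\mathbb{R}^{+}=B\cup D$ for some $D\in\mathbf{I}\subseteq\mathbf{\Gamma}$, which by the hypothesis applied to $E=D$ would place $\mathbb{R}^{+}$ in $\mathbf{\Gamma}$, contradicting our standing assumption. Hence $\mathbf{I}'$ is an ideal with $\mathbf{I}\subseteq\mathbf{I}'\subseteq\mathbf{\Gamma}$, and maximality of $\mathbf{I}$ forces $\mathbf{I}=\mathbf{I}'$. Taking $D=\emptyset\in\mathbf{I}$ gives $B=B\cup\emptyset\in\mathbf{I}'=\mathbf{I}$; since $\mathbf{I}$ was arbitrary in $M(\mathbf{\Gamma})$, $B\in\hat{I}(\mathbf{\Gamma})$.

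For $(\Rightarrow)$, I would fix $E\in\mathbf{\Gamma}$ and embed $E$ in a maximal ideal. The principal down-set
\[
\mathbf{J}_{E}:=\{C\subseteq\mathbb{R}^{+}:C\subseteq E\}
\]
is an ideal contained in $\mathbf{\Gamma}$ (using $E\neq\mathbb{R}^{+}$ to exclude $\mathbb{R}^{+}$ from $\mathbf{J}_{E}$, and closure of $\mathbf{\Gamma}$ under subsets to keep it inside $\mathbf{\Gamma}$), and a Zorn's-lemma argument extends it to some $\mathbf{I}\in M(\mathbf{\Gamma})$ — the chain condition reduces to checking that a chain-union of ideals inside $\mathbf{\Gamma}$ is again such an ideal, which is immediate from the definition. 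Then $E\in\mathbf{I}$ by construction and $B\in\mathbf{I}$ by the assumption $B\in\hat{I}(\mathbf{\Gamma})$, whence $B\cup E\in\mathbf{I}\subseteq\mathbf{\Gamma}$, as required. The main obstacle is precisely the propriety check for $\mathbf{I}'$ in $(\Leftarrow)$: this is the only point in the argument where $\mathbb{R}^{+}\notin\mathbf{\Gamma}$ enters essentially, and without that assumption the equivalence fails.
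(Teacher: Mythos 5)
Your proof is correct, and it is a genuinely different situation from the usual comparison: the paper does not prove Lemma~\ref{l3.3} at all --- it only cites Theorem~4.4 of~\cite{BDP}, of which this lemma is said to be a particular case. So there is no ``paper proof'' to match against, and what you have supplied is a self-contained argument that is worth assessing on its own terms.

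Both directions are sound. The ideal $\mathbf{I}'$ generated by $\mathbf{I}\cup\{B\}$ is exactly the right object for $(\Leftarrow)$, and your propriety check ($\mathbb{R}^+\notin\mathbf{I}'$, via the hypothesis applied to $E=D$) is the real content. For $(\Rightarrow)$, extending the principal down-set $\mathbf{J}_E$ by Zorn's lemma to a $\mathbf{\Gamma}$-maximal ideal is standard; one small point you gloss over is that a maximal element of the poset $\{\mathbf{J}\ \text{ideal}: \mathbf{J}_E\subseteq\mathbf{J}\subseteq\mathbf{\Gamma}\}$ is automatically $\mathbf{\Gamma}$-maximal in the sense of Definition~\ref{d3.2}, because any ideal $\mathbf{J}$ with $\mathbf{I}\subseteq\mathbf{J}\subseteq\mathbf{\Gamma}$ again lies in that poset. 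It is worth saying this explicitly.

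Your observation that one must assume $\mathbb{R}^+\notin\mathbf{\Gamma}$ is not a cosmetic convenience but a genuine correction: if $\mathbb{R}^+\in\mathbf{\Gamma}$ then $\mathbf{\Gamma}=2^{\mathbb{R}^+}$, $M(\mathbf{\Gamma})$ is the set of all maximal ideals on $\mathbb{R}^+$, and $\hat{I}(\mathbf{\Gamma})=\{\varnothing\}$ (each principal maximal ideal $\{A: x\notin A\}$ excludes every $B$ containing $x$), while the right-hand condition is vacuously true for every $B$; so the equivalence fails for any nonempty $B$. The reader is presumably meant to supply this hypothesis from the context (in the paper's applications $\mathbf{\Gamma}$ is $\mathcal{SP}$, $\mathcal{P}$ or $\mathcal{LP}$, none of which contain $\mathbb{R}^+$), but it is a real gap in the lemma as stated and you were right to flag it. Conversely, note that your argument never actually uses the covering hypothesis $\mathbb{R}^+=\bigcup_{A\in\mathbf{\Gamma}}A$; it is evidently inherited from the more general Theorem~4.4 of~\cite{BDP} and is not needed for this particular statement.
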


\begin{corollary}\label{c3.3}
Let $A$ be a subset of $\mathbb R^+$. Then $A \in \hat{I}(\mathcal{SP})$ holds if and only if
$$
A \cup E \in \mathcal{SP}
$$
for every $E \in \mathcal{SP}$.
\end{corollary}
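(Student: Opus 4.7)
The proof plan is to verify that $\mathcal{SP}$ satisfies the two hypotheses of Lemma~\ref{l3.3} and then simply invoke that lemma with $\mathbf{\Gamma} = \mathcal{SP}$. The desired biconditional then becomes a direct specialization.

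First I would record that $\mathcal{SP}$ is closed under subsets, which is already noted in the paragraph following Definition~\ref{d3.1}: if $C \subseteq A$, then $\lambda(C,h) \geq \lambda(A,h)$ for every $h>0$, so $\overline{p}(C) \geq \overline{p}(A)$, and when $\overline{p}(A)=1$ this forces $\overline{p}(C)=1$.

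Second, I would check the covering condition $\mathbb R^+ = \bigcup_{A \in \mathcal{SP}} A$. For this it suffices to observe that every singleton $\{x\} \subseteq \mathbb R^+$ is strongly porous at~$0$. Indeed, for any $x \in \mathbb R^+$ and all sufficiently small $h > 0$, the open interval $(0,h)$ contains no point of $\{x\}$ (taking $h < x$ when $x>0$, while for $x=0$ the interval $(0,h)$ never contains $0$), so $\lambda(\{x\}, h)/h = 1$ and $\overline{p}(\{x\})=1$. Hence every point of $\mathbb R^+$ lies in some member of $\mathcal{SP}$.

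With both hypotheses of Lemma~\ref{l3.3} verified, the lemma applied to $\mathbf{\Gamma}=\mathcal{SP}$ yields exactly the statement of the corollary: $A \in \hat{I}(\mathcal{SP})$ if and only if $A \cup E \in \mathcal{SP}$ for every $E \in \mathcal{SP}$. There is no real obstacle here — the corollary is a formal transcription of Lemma~\ref{l3.3}, and the only small point that needs to be mentioned explicitly is the covering of $\mathbb R^+$ by strongly porous at~$0$ sets.
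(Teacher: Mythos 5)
Your proof is correct and matches the paper's (implicit) approach exactly: Corollary~\ref{c3.3} is stated as a direct specialization of Lemma~\ref{l3.3} with $\mathbf{\Gamma}=\mathcal{SP}$, and you have supplied the routine verification of the two hypotheses (closure under subsets and the covering of $\mathbb R^+$ by strongly porous at~$0$ sets, via singletons). Nothing further is needed.
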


For $q>1$ and $E \subseteq \mathbb R^+$ the $q$--blow up of $E$ is the set
\begin{equation}\label{e3.4}
E(q):=\bigcup_{x \in E} (q^{-1}x, qx).
\end{equation}
The set $E(q)$ is open for every $E \subseteq \mathbb R^+$ and $q>1$. If the set $Cc^1(E(q))$ of connected components of $(0,1) \cap E(q)$ is infinite, then there is a unique sequence $((a_i, b_i))_{i \in \mathbb N}$ of open intervals such that:
\begin{enumerate}
\item[$(i_1)$] For every $(a,b) \in Cc^1(E(q))$ there is a unique $i_0 \in \mathbb N$ satisfying the equality
$$
(a,b) = (a_{i_0}, b_{i_0});
$$
\item[$(i_2)$] The inequalities
$$
b_{i+1} < a_i < b_{i}
$$
hold for every $i \in \mathbb N$;
\item[$(i_3)$] The equality
$$
(0, 1)\cap E(q) = \bigcup_{i=1}^{\infty} (a_i, b_i)
$$
holds.
\end{enumerate}

If a sequence $((a_i, b_i))_{i \in \mathbb N}$ satisfies $(i_1)-(i_3)$ we write
\begin{equation*}
Cc^1(E(q)) = ((a_i, b_i))_{i \in \mathbb N}.
\end{equation*}
The next lemma is a reformulation of Theorem~6.6 from~\cite{BDP}.

\begin{lemma}\label{l3.4}
The following conditions are equivalent for every $E \subseteq \mathbb R$ with $0 \in \overline{E-\{0\}}$.
\begin{enumerate}
\item[$(i)$] $E \in \hat{I}(\mathcal{SP})$.
\item[$(ii)$] For every $q>1$ there is an infinite sequence $((a_i, b_i))_{i \in \mathbb N}$ such that
\begin{equation*}
Cc^1(E(q)) = ((a_i, b_i))_{i \in \mathbb N} \text{ and }\limsup_{i\to\infty}\frac{b_i}{a_{i+1}} < \infty.
\end{equation*}
\end{enumerate}
\end{lemma}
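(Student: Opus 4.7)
The plan is to route through Corollary~\ref{c3.3}, which reformulates $(i)$ as the statement that $E\cup F\in\mathcal{SP}$ for every $F\in\mathcal{SP}$. The natural bridge to $(ii)$ is a multiplicative reformulation of strong porosity: a set $F\subseteq\mathbb R^+$ belongs to $\mathcal{SP}$ iff for every $q>1$ the complement of $F(q)$ in $(0,1)$ admits gaps $(\beta_n,\alpha_n)$ with $\alpha_n\to 0$ and $\alpha_n/\beta_n\to\infty$. The forward direction follows by shrinking the largest $F$-hole $(c_n,d_n)\subseteq(0,h_n)\setminus F$ associated with $(h_n)\in UMP(F)$ (so $c_n/h_n\to 0$ and $d_n/h_n\to 1$ by Lemma~\ref{l2.3}) to the $F(q)$-hole $(qc_n,d_n/q)$ of ratio $d_n/(q^2 c_n)\to\infty$; the converse follows from $F(q)\supseteq F$.

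For $(ii)\Rightarrow(i)$, fix $F\in\mathcal{SP}$, $q>1$, and $(h_n)\in UMP(F)$ with holes $(c_n,d_n)$. Condition $(ii)$ provides $Cc^1(E(q))=((a_i,b_i))_{i\in\mathbb N}$ with $b_i/a_{i+1}\le M$ eventually. I would pick $i(n)$ with $b_{i(n)}\le d_n<b_{i(n)-1}$ and set $t_n:=b_{i(n)}$; the bound $M$ forces $t_n/h_n$ into a compact subinterval of $(0,1]$, and after passing to a subsequence and invoking Lemma~\ref{l2.4} one may arrange $t_n/h_n\to 1$. Since $t_n=b_{i(n)}$ terminates an $E(q)$-component, the open interval $(qa_{i(n)+1},b_{i(n)}/q)$ is $E$-free; combined with the $F$-freeness of $(c_n,d_n)$, this gives $\lambda(E\cup F,t_n)/t_n\to 1$, so $(t_n)\in UMP(E\cup F)$ and Corollary~\ref{c2.9} yields $E\cup F\in\mathcal{SP}$.

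For $(i)\Rightarrow(ii)$, I argue contrapositively, exhibiting $F\in\mathcal{SP}$ with $E\cup F\notin\mathcal{SP}$ from the failure of $(ii)$. If $Cc^1(E(q_0))$ is finite for some $q_0>1$, the hypothesis $0\in\overline{E-\{0\}}$ forces one component to be of the form $(0,c)$, making $E$ ``$q_0$-dense'' in $(0,c)$; any $F\in\mathcal{SP}$ whose UMP-witnessing holes lie in $(0,c)$ is then broken by $E$ into sub-pieces of multiplicative ratio bounded by $q_0^2$, so $\overline p(E\cup F)<1$. If instead $\limsup b_i/a_{i+1}=\infty$, pass to $(i_k)$ realizing the limit, from which $a_{i_k}/b_{i_k+1}\to\infty$ as well. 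In each gap $(b_{i_k+1},a_{i_k})$ I would plant a subinterval $(\alpha_k,\gamma_k)$ with $\gamma_k/\alpha_k\to\infty$, modelled on the geometric-mean construction from the proof of Theorem~\ref{t2.6}, and set $F:=\mathbb R^+\setminus\bigcup_k(\alpha_k,\gamma_k)$. A direct computation yields $\overline p(F)=1$, while $UMP(F)$ is forced to lie along $(\gamma_k)$ — a scale strictly inside the $E(q_0)$-gaps. A scale-matching argument using the component list of $(ii)$ then shows $UMP(F)\cap UMP(E)=\varnothing$, so $\{E,F\}$ is not coherently porous and Corollary~\ref{c2.9} gives $E\cup F\notin\mathcal{SP}$.

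The hardest step will be the second subcase of $(i)\Rightarrow(ii)$: the Theorem~\ref{t2.6} construction yields only $\overline p(C)=\tfrac12\overline p(A)$, so to produce $F\in\mathcal{SP}$ one must exploit the unbounded ratios $a_{i_k}/b_{i_k+1}$ to push the sub-gap ratios $\gamma_k/\alpha_k$ all the way to infinity. Verifying that $UMP(F)\cap UMP(E)=\varnothing$ is the accompanying delicate point, requiring careful control of the planted scale $(\gamma_k)$ against the natural $E(q_0)$-component scales supplied by $(ii)$.
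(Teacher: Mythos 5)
The paper does not prove Lemma~\ref{l3.4} internally: it is introduced with the sentence ``The next lemma is a reformulation of Theorem~6.6 from~\cite{BDP}'' and is taken on citation. So there is no in-paper argument to compare yours against, and your attempt to supply a direct proof is a genuinely different route. Unfortunately, as written it has a concrete gap and, more fundamentally, is trying to prove a condition that appears to have been mis-transcribed from~\cite{BDP}.

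The concrete gap is in $(i)\Rightarrow(ii)$, second subcase. You pass to $(i_k)$ with $b_{i_k}/a_{i_k+1}\to\infty$ and assert ``from which $a_{i_k}/b_{i_k+1}\to\infty$ as well.'' This does not follow: factoring $b_i/a_{i+1}=(b_i/a_i)\cdot(a_i/b_{i+1})\cdot(b_{i+1}/a_{i+1})$, the blow-up of the left side can be driven entirely by one or both of the \emph{component} ratios $b_i/a_i$ or $b_{i+1}/a_{i+1}$ while the \emph{gap} ratio $a_i/b_{i+1}$ stays bounded (take, e.g., $E=\bigcup_n\bigl(2^{-(n+1)!},\tfrac12\,2^{-n!}\bigr)$). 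When the gap stays bounded, your planted intervals $(\alpha_k,\gamma_k)\subseteq(b_{i_k+1},a_{i_k})$ cannot have $\gamma_k/\alpha_k\to\infty$, so the $F$ you build is not strongly porous at~$0$ and the argument collapses. For the same reason the $E$-free interval you use in $(ii)\Rightarrow(i)$, namely $(qa_{i(n)+1},b_{i(n)}/q)$, is both the wrong shape (shrinking rather than expanding, and overlapping component interiors where $E$ may sit) and, once repaired to the correct $(q^{-1}b_{i(n)+1},qa_{i(n)})$, of ratio $q^2a_{i(n)}/b_{i(n)+1}\le q^2M$, which is \emph{bounded} under the stated hypothesis and hence cannot give $\lambda(E\cup F,t_n)/t_n\to1$.

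This last observation is symptomatic of a deeper problem with the statement as printed. The condition $\limsup_{i} b_i/a_{i+1}<\infty$ bounds simultaneously the component ratios $b_i/a_i$ and the gap ratios $a_i/b_{i+1}$ of $E(q)$, and bounded gap ratios for a fixed $q>1$ force every $E$-free interval near~$0$ to have multiplicative length $\le q^2M$, hence $\overline p(E)\le 1-1/(q^2M)<1$, i.e.\ $E\notin\mathcal{SP}\supseteq\hat I(\mathcal{SP})$. Thus $(ii)$ as written actually \emph{contradicts} $(i)$. Indeed, for the set $E$ of Example~\ref{ex3.6} (which the paper uses precisely to illustrate membership in $\hat I(\mathcal{SP})$) one computes $b_i/a_{i+1}=(qq_1)^2x_i/x_{i+1}\to\infty$, so $(ii)$ fails for that $E$ too. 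The ratio in $(ii)$ is almost certainly meant to be $b_i/a_i$ (bounded component ratio), and that is the condition your $(i)\Rightarrow(ii)$ subcase would actually need: when $(ii)$ fails via an unbounded \emph{component} ratio, the set $F$ must be planted \emph{inside} the fat component $(a_{i_k},b_{i_k})$ (where $E$ is $q$-dense), not in the adjacent gap, to break coherence. As it stands, your proposal both targets the wrong ratio and plants $F$ in the wrong place; the correct construction is not a small repair of your argument.
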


Corollary~\ref{c3.3}, Lemma~\ref{l3.4} and Corollary~\ref{c2.9} give us the following theorem.

\begin{theorem}\label{t3.5}
The following statements are equivalent for every $E \in \mathcal{SP}$.
\begin{enumerate}
\item[$(i)$] $E \in \hat{I}(\mathcal{SP})$.
\item[$(ii)$] The pair $\{E, A\}$ is coherently porous for every $A \in \mathcal{SP}$.
\item[$(iii)$] The set $E \cup A$ is strongly porous at~$0$ for every $A \in \mathcal{SP}$.
\item[$(iv)$] We have either $0 \notin \overline{E-\{0\}}$ or, for every $q>1$, there is an infinite sequence $((a_i, b_i))_{i \in \mathbb N}$ of open interval such that
\begin{equation*}
Cc^1(E(q)) = ((a_i, b_i))_{i \in \mathbb N} \text{ and }\limsup_{i\to\infty}\frac{b_i}{a_{i+1}} < \infty.
\end{equation*}
\end{enumerate}
\end{theorem}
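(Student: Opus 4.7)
The theorem is essentially a bookkeeping exercise that stitches together three previously established results: Corollary~\ref{c3.3} (which translates membership in $\hat{I}(\mathcal{SP})$ into a union condition), Corollary~\ref{c2.9} (which translates ``union is strongly porous'' into ``family is coherently porous''), and Lemma~\ref{l3.4} (which gives the blow-up characterization of $\hat{I}(\mathcal{SP})$ under the side hypothesis $0 \in \overline{E-\{0\}}$). My plan is to verify the equivalences by closing the loop $(i) \Leftrightarrow (iii) \Leftrightarrow (ii)$ and then $(i) \Leftrightarrow (iv)$ separately.

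First, I would observe that $(ii) \Leftrightarrow (iii)$ is immediate from Corollary~\ref{c2.9} applied with $n=2$ to the family $\{E,A\}$: since both $E$ and $A$ are strongly porous at~$0$, the union $E \cup A$ is strongly porous at~$0$ exactly when the pair $\{E,A\}$ is coherently porous. Next, $(i) \Leftrightarrow (iii)$ is exactly Corollary~\ref{c3.3} specialized to an element $E$ of $\mathcal{SP}$ (since $\mathcal{SP}$ is closed under subsets, saying $E \cup A \in \mathcal{SP}$ for every $A \in \mathcal{SP}$ is the condition appearing in that corollary).

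For the equivalence $(i) \Leftrightarrow (iv)$ I would split into two cases according to the dichotomy built into $(iv)$. If $0 \notin \overline{E-\{0\}}$, then by the implication $(i) \Rightarrow (iii)$ of Theorem~\ref{t2.6} the pair $\{E,C\}$ is coherently porous for every $C \subseteq \mathbb R^+$; in particular $(ii)$ holds, so by the already established chain we obtain $(i)$. Conversely, in this first sub-case $(iv)$ is satisfied by definition, so nothing more is required. If on the other hand $0 \in \overline{E-\{0\}}$, then Lemma~\ref{l3.4} applies directly and furnishes the equivalence between $E \in \hat{I}(\mathcal{SP})$ and the existence, for every $q>1$, of an infinite sequence $((a_i,b_i))_{i \in \mathbb N}$ with $Cc^1(E(q)) = ((a_i,b_i))_{i \in \mathbb N}$ and $\limsup_{i\to\infty} b_i/a_{i+1} < \infty$, which is exactly the second alternative in $(iv)$.

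The only mild subtlety, and the place where I would be careful, is making sure the two alternatives in $(iv)$ are correctly dispatched: one must check that whenever $0 \notin \overline{E-\{0\}}$ the membership $E \in \hat{I}(\mathcal{SP})$ really does hold unconditionally, which is why routing through Theorem~\ref{t2.6} (rather than through Lemma~\ref{l3.4}, whose hypothesis excludes this case) is necessary. Beyond that dispatch, the proof is purely a citation of the three preceding results and has no substantive technical obstacle.
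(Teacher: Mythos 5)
Your proposal is correct and follows essentially the same route as the paper, which proves the theorem simply by invoking Corollary~\ref{c3.3}, Corollary~\ref{c2.9}, and Lemma~\ref{l3.4}. You helpfully make explicit the small dichotomy implicit in the paper's one-line proof: when $0 \notin \overline{E-\{0\}}$ one cannot use Lemma~\ref{l3.4}, and routing instead through Theorem~\ref{t2.6}~$(i)\Rightarrow(iii)$ (or equivalently observing directly that $E\cup A$ agrees with $A$ near $0$, so $(iii)$ holds trivially) is exactly the right way to close that case.
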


In the next example we construct a set $E \in \hat{I}(\mathcal{SP})$ for which condition $(i)$ from Theorem~\ref{t2.6} does not hold.

\begin{example}\label{ex3.6}
Let $(x_n)_{n \in \mathbb N}$ be a sequence of positive real numbers such that
\begin{equation}\label{eq3.4}
\lim_{n\to \infty} \frac{x_{n+1}}{x_n} = 0
\end{equation}
and let $q> 1$. Write
\begin{equation}\label{eq3.5}
E :=\bigcup_{n=1}^{\infty} (q^{-1}x_n, qx_n).
\end{equation}
Then the pairs $\{E, A\}$ are coherently porous for all strongly porous at~$0$ sets $A \subseteq \mathbb R^+$. The last statement follows from Theorem~\ref{t3.5}, condition~\eqref{eq3.4}, and the equality
\begin{equation}\label{ex3.6e1}
X(q)(q_1) = X(qq_1)
\end{equation}
where $X$ is the set of elements of the sequence $(x_n)_{n \in \mathbb N}$ and $X(q)(q_1)$ is the $q_1$--blow up of the $q$--blow of the set $X$.
\end{example}

\begin{remark}\label{r3.7}
The set $E$ from Example~\ref{ex3.6} belongs to the so-called completely strongly porous at~$0$ sets which form a proper subset of $\hat{I}(\mathcal{SP})$. For details see~\cite{BD} and~\cite{BDP}.
\end{remark}

For every nonempty set $X$, each $2^X$--maximal ideal $\mathbf{I}$ generates the ultrafilter $\mathcal{F}_\mathbf{I}$ on $X$ as
\begin{equation}\label{eq3.6}
\mathcal{F}_\mathbf{I} := \{X-E: E \in \mathbf{I}\}
\end{equation}
and conversely, for every ultrafilter $\mathcal{F}$ on $X$ the set
$$
\mathbf{I}_\mathcal{F} = \{X-E: E \in \mathcal{F}\}
$$
is a $2^X$--maximal ideal. It is well known that a filter $\mathcal{F}$ on $X$ is an ultrafilter if and only if, for every $A \subseteq X$, either $A \in \mathcal{F}$ or $(X-A)\in \mathcal{F}$. It implies

\begin{proposition}\label{p3.8}
Let $X$ be a nonempty set. Then for every two distinct $2^X$--maximal ideals $\mathbf{I}_1$ and $\mathbf{I}_2$ there is a set $A \subseteq X$ such that $A \in \mathbf{I}_1$ and $X-A \in \mathbf{I}_2$.
\end{proposition}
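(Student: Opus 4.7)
The strategy is to transport the question across the bijective correspondence between $2^X$--maximal ideals and ultrafilters on $X$ recalled in equation~\eqref{eq3.6}, and then to apply the dichotomy property that characterises ultrafilters.

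First I would pass to the dual ultrafilters $\mathcal{F}_j := \mathcal{F}_{\mathbf{I}_j} = \{X - E : E \in \mathbf{I}_j\}$ for $j = 1, 2$. Since the complement map $E \mapsto X - E$ is a bijection of $2^X$, the hypothesis $\mathbf{I}_1 \neq \mathbf{I}_2$ forces $\mathcal{F}_1 \neq \mathcal{F}_2$. Hence there exists $B \subseteq X$ lying in exactly one of the two ultrafilters; relabelling $\mathbf{I}_1$ and $\mathbf{I}_2$ if necessary, I may assume $B \in \mathcal{F}_1$ and $B \notin \mathcal{F}_2$.

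Next, applying the ultrafilter dichotomy to $\mathcal{F}_2$ gives $X - B \in \mathcal{F}_2$. Setting $A := X - B$, the definition of $\mathcal{F}_1$ translates $X - A = B \in \mathcal{F}_1$ into $A \in \mathbf{I}_1$, while $A = X - B \in \mathcal{F}_2$ translates into $X - A \in \mathbf{I}_2$. This is exactly the required set.

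The main obstacle is essentially nonexistent: the whole argument is a direct unfolding of the ideal--ultrafilter duality together with the standard fact, already quoted in the paragraph before the proposition, that a filter $\mathcal{F}$ on $X$ is an ultrafilter precisely when $A \in \mathcal{F}$ or $X - A \in \mathcal{F}$ for every $A \subseteq X$. The only point that may deserve an extra line is the injectivity of $\mathbf{I} \mapsto \mathcal{F}_\mathbf{I}$, which follows immediately from the inversion formula $\mathbf{I}_\mathcal{F} = \{X - F : F \in \mathcal{F}\}$ recorded just above the proposition.
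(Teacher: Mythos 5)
Your proposal is correct and follows essentially the same route as the paper: translate via the correspondence~\eqref{eq3.6} to the dual ultrafilters, invoke the ultrafilter dichotomy, and translate back. The only cosmetic difference is bookkeeping --- the paper directly picks $A\in\mathbf{I}_1\setminus\mathbf{I}_2$ (tacitly using that distinct maximal ideals are incomparable), while you pick a distinguishing set for the ultrafilters and absorb the asymmetry with a harmless relabelling, which is legitimate because the conclusion is symmetric in $\mathbf{I}_1,\mathbf{I}_2$ under $A\mapsto X-A$.
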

\begin{proof}
Since $\mathbf{I}_1 \neq \mathbf{I}_2$, there is $A \subseteq X$ such that
$$
A \in \mathbf{I}_1 \text{ and } A \notin \mathbf{I}_2.
$$
By~\eqref{eq3.6}, the statement $(X-A) \notin \mathcal{F}_{\mathbf{I}_2}$ holds. Using the above mentioned characteristic property of ultrafilters we see that $A \in \mathcal{F}_{\mathbf{I}_2}$. The last statement is equivalent to $(X-A) \in \mathbf{I}_2$.
\end{proof}

Now we will show that a result similar to Proposition~\ref{p3.8} is valid for $\mathcal{SP}$--maximal ideals.

The following two lemmas are simple modifications of the corresponding results from~\cite{BDP}.

\begin{lemma}\label{l3.9}
Let $E \subseteq \mathbb R^+$ and $E \notin \mathcal{SP}$. Then there is $q>1$ such that the equality
\begin{equation*}
E(q) \cap (0, 1) = (0, 1)
\end{equation*}
holds.
\end{lemma}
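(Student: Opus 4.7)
Since $E \notin \mathcal{SP}$, we have $\alpha := \overline{p}(E) < 1$. I first note that this forces $0 \in \overline{E - \{0\}}$: otherwise $(0, t) \cap E = \varnothing$ for some $t > 0$, which gives $\lambda(E, h) = h$ for every $h \in (0, t)$ and hence $\overline{p}(E) = 1$ (a fact used implicitly in the proof of Theorem~\ref{t2.6}). The plan is then to choose $q > 1$ so large that the window $(y/q, qy)$ either is too wide relative to $qy$ to fit inside a single gap of $E$, or else already contains a fixed element of $E$ lying near $0$.

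Fix any $\beta \in (\alpha, 1)$. From the definition of $\overline{p}(E)$ as a $\limsup$ there is $h_0 > 0$ with $\lambda(E, h) < \beta h$ for every $h \in (0, h_0)$. Since $0$ is an accumulation point of $E$, pick $x_0 \in E$ with $0 < x_0 < h_0$. Finally choose $q > 1$ satisfying
\[
q^2 > \frac{1}{1 - \beta} \qquad \text{and} \qquad q > \frac{1}{x_0};
\]
note that the first condition is equivalent to $1 - 1/q^2 > \beta$.

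It remains to verify, for every $y \in (0, 1)$, that $y \in E(q)$, i.e.\ that $(y/q, qy) \cap E \neq \varnothing$. I split on the size of $qy$. If $qy < h_0$ and the intersection were empty, then $(y/q, qy)$ would be an open subinterval of $(0, qy)$ disjoint from $E$, so
\[
\frac{\lambda(E, qy)}{qy} \;\geq\; \frac{qy - y/q}{qy} \;=\; 1 - \frac{1}{q^2} \;>\; \beta,
\]
contradicting $\lambda(E, h)/h < \beta$ applied at $h = qy \in (0, h_0)$. If instead $qy \geq h_0$, then $qy \geq h_0 > x_0$, while $y < 1$ together with $1/q < x_0$ gives $y/q < 1/q < x_0$; hence $x_0 \in (y/q, qy) \cap E$. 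In either case $y \in E(q)$, which is the desired conclusion $E(q) \cap (0,1) = (0,1)$.

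I do not anticipate a deep obstacle: the lemma is essentially a quantitative restatement of the fact that failure of strong porosity at $0$ forces $E$ to meet every window $(y/q, qy)$ once $q$ is large enough. The one subtlety is that the hypothesis ``$E \notin \mathcal{SP}$'' tacitly supplies accumulation of $E$ at $0$, and it is this accumulation that furnishes the fixed point $x_0$ needed to handle the values of $y$ bounded away from $0$.
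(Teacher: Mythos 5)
Your proof is correct. The core mechanism is the same as the paper's: pick $\beta\in(\overline{p}(E),1)$, get $\lambda(E,h)<\beta h$ for $h<h_0$, and then observe that if the window $(y/q,qy)$ missed $E$ it would be a gap in $(0,qy)$ of relative length $1-1/q^2>\beta$, a contradiction. Where you diverge is in upgrading from ``small $y$'' to all of $(0,1)$: the paper first proves $E(q)\supseteq(0,t)$ for \emph{some} $t>0$, splitting $(0,t)$ into $E\cap(0,t)$ and $(0,t)\setminus E$, and then rescales using the blow-up composition identity $X(q)(q_1)=X(qq_1)$ (its equation~\eqref{ex3.6e1}) to reach $(0,1)$. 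You instead avoid the composition identity entirely by fixing a single point $x_0\in E\cap(0,h_0)$ (available because $E\notin\mathcal{SP}$ forces $0\in\overline{E-\{0\}}$) and requiring $q>1/x_0$, so that $x_0$ itself witnesses $y\in E(q)$ whenever $qy\ge h_0$. Your version is a bit more self-contained since it does not invoke the composition lemma; the paper's version fits more naturally into a development where that identity is already established and reused.
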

\begin{proof}
Using~\eqref{ex3.6e1} we see that the lemma is valid if
\begin{equation}\label{eq3.7}
E(q) \cap (0, t) = (0, t)
\end{equation}
holds for some $t>0$. The last equality is evident for every $q>1$ if there is $t>0$ such that $(0, t) \subseteq E$. Hence we can assume $(0, t)\setminus E \neq \varnothing$ for every $t>0$. Since $E$ is not strongly porous at~$0$, there is $s \in (0,1)$ such that
$$
\limsup_{h\to 0+}\frac{\lambda(E,h)}{h} < s.
$$
Consequently, there exists $t>0$ such that, for every $y\in (0,t) \setminus E$, we can find $x \in E$ satisfying the inequalities
$$
x < y \quad \text{and}\quad \frac{y-x}{y} < s.
$$
These inequalities imply
$$
x < y < \frac{x}{1-s}.
$$
Hence, $y \in (q^{-1}x, qx)$ holds with $q=1/(1-s)$. Thus, the inclusion $(0,t)\setminus E \subseteq E(q)$ holds for such $q$. Since $E \cap (0,t) \subseteq E(q)$ for all $t>0$ and $q>1$, we obtain
$$
(0,t) = (E \cap (0,t)) \cup ((0,t)\setminus E) \subseteq E(q) \cup E(q) = E(q).
$$
Thus
$$
(0,t) \subseteq (0, t) \cap E(q) \subseteq (0,t)
$$
which implies~\eqref{eq3.7}.
\end{proof}

\begin{lemma}\label{l3.11}
Let $E \subseteq \mathbb R^+$ and $q>1$. Then $E \in \mathcal{SP}$ if and only if $E(q) \in \mathcal{SP}$.
\end{lemma}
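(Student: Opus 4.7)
My plan is to handle the two implications separately, with the easy direction coming from a set-theoretic inclusion and the harder direction from a careful geometric argument about which points escape $E(q)$.

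First, I would prove that $E(q) \in \mathcal{SP}$ implies $E \in \mathcal{SP}$ by observing that $E \setminus \{0\} \subseteq E(q)$, since for every $x > 0$ we have $x \in (q^{-1}x, qx) \subseteq E(q)$. Any open subinterval of $(0,h)$ containing no point of $E(q)$ therefore contains no point of $E$ either, whence $\lambda(E(q), h) \leq \lambda(E, h)$ for every $h > 0$, and consequently $\overline{p}(E(q)) \leq \overline{p}(E)$. If $\overline{p}(E(q)) = 1$ we immediately conclude $\overline{p}(E) = 1$.

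For the converse, the key observation is that $y \notin E(q)$ if and only if $E \cap (y/q, qy) = \varnothing$, which follows directly from the definition of the $q$--blow up. Now suppose $E \in \mathcal{SP}$ and pick any $(h_n)_{n \in \mathbb N} \in UMP(E)$; let $(c_n, d_n)$ denote the largest open subinterval of $(0, h_n) \setminus E$. Lemma~\ref{l2.3} gives $d_n / h_n \to 1$, and since $(d_n - c_n)/h_n \to \overline{p}(E) = 1$, we also get $c_n / d_n \to 0$. In particular, $q^2 c_n < d_n$ for all sufficiently large $n$, so the interval $(qc_n, d_n/q)$ is well-defined and nonempty.

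For every $y \in (qc_n, d_n/q)$ we have $(y/q, qy) \subseteq (c_n, d_n)$, and since $(c_n, d_n)$ is disjoint from $E$, the characterization above gives $y \notin E(q)$. Setting $h'_n := d_n / q$, we obtain $h'_n \to 0$ together with
$$
\frac{\lambda(E(q), h'_n)}{h'_n} \geq \frac{d_n/q - qc_n}{d_n/q} = 1 - q^2 \frac{c_n}{d_n} \longrightarrow 1,
$$
so $\overline{p}(E(q)) = 1$, completing the proof. The main obstacle is simply spotting the right interval $(qc_n, d_n/q)$ inside $(c_n, d_n)$ that produces a ratio tending to $1$; once the equivalence $y \notin E(q) \Leftrightarrow E \cap (y/q, qy) = \varnothing$ is in hand, the rest is a direct application of Lemma~\ref{l2.3}.
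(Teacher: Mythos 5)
Your proof is correct and follows essentially the same route as the paper: the easy direction uses $E \setminus \{0\} \subseteq E(q)$ so that $\overline{p}(E(q)) \leq \overline{p}(E)$, and the hard direction produces, from each $E$--free interval $(c_n,d_n)$ with $c_n/d_n \to 0$, the $E(q)$--free interval $(qc_n, q^{-1}d_n)$ whose endpoint ratio still tends to $0$. The only cosmetic difference is that you obtain the witnessing intervals via $UMP(E)$ and Lemma~\ref{l2.3}, whereas the paper simply posits such a sequence directly from the definition of strong porosity.
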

\begin{proof}
Since $E(q) \supseteq (E \setminus\{0\})(q)$ and
$$
(E \in \mathcal{SP}) \Leftrightarrow (E \setminus\{0\}\in \mathcal{SP}),
$$
the implication $(E(q) \in \mathcal{SP}) \Rightarrow  (E\in \mathcal{SP})$ is trivial.

Let $E\in \mathcal{SP}$. Then there is a sequence $((a_n, b_n))_{n \in \mathbb N}$ such that
$$
0<a_n < b_n, \quad \lim_{n\to \infty} b_n=0, \quad (a_n, b_n)\cap E = \varnothing \quad \text{and}\quad \lim_{n\to \infty} \frac{a_n}{b_n} = 0.
$$
It is easy to prove that
$$
qa_n < q^{-1}b_n\quad \text{and}\quad (qa_n, q^{-1}b_n) \cap E(q)=\varnothing
$$
for all sufficiently large $n$. Since
$$
\lim_{n\to \infty} \frac{qa_n}{q^{-1}b_n} = q^2\lim_{n\to \infty} \frac{a_n}{b_n} = 0,
$$
the set $E(q)$ is strongly porous at~$0$. The implication
$$
(E\in \mathcal{SP})\Rightarrow (E(q) \in \mathcal{SP})
$$
follows.
\end{proof}

\begin{lemma}\label{l3.12}
Let $\mathbf{I}$ be a $\mathcal{SP}$--maximal ideal and let $q>1$. Then the statement
\begin{equation}\label{l3.12e1}
E(q) \in \mathbf{I}
\end{equation}
holds for every $E \in \mathbf{I}$.
\end{lemma}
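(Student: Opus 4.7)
The plan is to extend $\mathbf{I}$ to an ideal that contains $E(q)$ without leaving $\mathcal{SP}$; maximality of $\mathbf{I}$ (Definition~\ref{d3.2}) will then force the extension to coincide with $\mathbf{I}$, delivering $E(q) \in \mathbf{I}$. The natural candidate is
$$
\mathbf{J} := \{B \subseteq \mathbb R^+ : B \subseteq A \cup E(q) \text{ for some } A \in \mathbf{I}\}.
$$
Clearly $\mathbf{I} \subseteq \mathbf{J}$ (using $A \subseteq A \cup E(q)$) and $E(q) \in \mathbf{J}$ (take $A = E$).

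The axioms of Definition~\ref{d3.1} for $\mathbf{J}$ are routine. Downward closure is immediate. For closure under pairwise unions, if $B_i \subseteq A_i \cup E(q)$ with $A_i \in \mathbf{I}$ for $i = 1, 2$, then $B_1 \cup B_2 \subseteq (A_1 \cup A_2) \cup E(q)$ and $A_1 \cup A_2 \in \mathbf{I}$. The condition $\mathbb R^+ \notin \mathbf{J}$ will follow from the inclusion $\mathbf{J} \subseteq \mathcal{SP}$ established next, since $\overline{p}(\mathbb R^+) = 0$ and hence $\mathbb R^+ \notin \mathcal{SP}$.

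The heart of the argument is proving $\mathbf{J} \subseteq \mathcal{SP}$. Given $B \in \mathbf{J}$ with $B \subseteq A \cup E(q)$ and $A \in \mathbf{I}$, note that $A \cup E \in \mathbf{I} \subseteq \mathcal{SP}$, so by Lemma~\ref{l3.11} the blow-up $(A \cup E)(q)$ lies in $\mathcal{SP}$. From the definition~\eqref{e3.4} of $q$-blow up one reads off the set identity $(A \cup E)(q) = A(q) \cup E(q)$ and the inclusion $A \setminus \{0\} \subseteq A(q)$ (for $x > 0$, $x \in (q^{-1}x, qx)$), so
$$
B \subseteq A \cup E(q) \subseteq (A \cup E)(q) \cup \{0\}.
$$
Since the intervals $(0, h)$ in the definition of $\lambda$ exclude the point $0$, we have $\overline{p}(F) = \overline{p}(F \cup \{0\})$ for every $F \subseteq \mathbb R^+$, hence $(A \cup E)(q) \cup \{0\} \in \mathcal{SP}$; the downward closure of $\mathcal{SP}$ under subsets then yields $B \in \mathcal{SP}$.

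With $\mathbf{I} \subseteq \mathbf{J} \subseteq \mathcal{SP}$ and $\mathbf{J}$ an ideal, maximality of $\mathbf{I}$ forces $\mathbf{J} = \mathbf{I}$, and since $E(q) \in \mathbf{J}$ we conclude $E(q) \in \mathbf{I}$. No real obstacle is expected; the only mild subtlety is the bookkeeping with the singleton $\{0\}$, which is handled by the observation that porosity at $0$ is insensitive to whether $0$ itself belongs to the set.
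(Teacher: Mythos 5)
Your proof is correct and uses essentially the same ingredients as the paper's: the same candidate ideal $\mathbf{J}$, the same appeal to Lemma~\ref{l3.11} via the identity $(A\cup E)(q)=A(q)\cup E(q)$ and the inclusion $A\subseteq A(q)\cup\{0\}$, and the same invocation of $\mathcal{SP}$-maximality. The only difference is presentational: you prove $\mathbf{J}\subseteq\mathcal{SP}$ directly, whereas the paper phrases the argument as a proof by contradiction, but the underlying computation is identical.
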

\begin{proof}
Let $E \in \mathbf{I}$. If $E(q) \cup A \in \mathcal{SP}$ holds for every $A \in \mathbf{I}$, then the set $\mathbf{J}$ defined by the rule
$$
(X \in \mathbf{J}) \Leftrightarrow (\exists B \in \mathbf{I} \text{ such that } X \subseteq B \cup E(q))
$$
is an ideal on $\mathbb R^{+}$ for which
\begin{equation}\label{l3.12e2}
\mathbf{I} \subseteq \mathbf{J} \subseteq \mathcal{SP}.
\end{equation}
Since $\mathbf{I}$ is $\mathcal{SP}$--maximal ideal, \eqref{l3.12e2} implies the equality $\mathbf{I} = \mathbf{J}$. It is clear that $E(q) \in  \mathbf{J}$. Thus, if $E(q)\notin \mathbf{I}$, then there is $A \in \mathbf{I}$ such that
\begin{equation}\label{l3.12e3}
E(q) \cup A \notin \mathcal{SP}.
\end{equation}
From $E \in \mathbf{I}$ and $A \in \mathbf{I}$ it follows that $A \cup E \in \mathbf{I}$. In particular, $A \cup E \in \mathbf{I}$ implies $A \cup E \in \mathcal{SP}$. Using Lemma~\ref{l3.11} we obtain
$$
(A \cup E)(q) \in \mathcal{SP},
$$
where $(A \cup E)(q)$ is the $q$--blow up of the set $A \cup E$. It follows directly from~\eqref{e3.4} that
$$
(A \cup E)(q) = A(q) \cup E(q).
$$
Moreover we evidently have the inclusion
$$
A \subseteq A(q) \cup \{0\}.
$$
Hence
$$
E(q) \cup A \subseteq E(q) \cup A(q) \cup \{0\} \in \mathcal{SP},
$$
which contradicts~\eqref{l3.12e3}.
\end{proof}

\begin{theorem}\label{t3.13}
Let $\mathbf{I}_1$ and $\mathbf{I}_2$ be two distinct $\mathcal{SP}$--maximal ideals. Then there is $A \in \mathbf{I}_1$ such that $(\mathbb R^+ - A) \in \mathbf{I}_2$.
\end{theorem}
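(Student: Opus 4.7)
The plan is to exploit Lemmas~\ref{l3.9}, \ref{l3.11} and \ref{l3.12} in order to pass from a witness of $\mathbf{I}_1 \neq \mathbf{I}_2$ to a ``blown-up'' witness whose complement in $\mathbb R^+$ is small enough to land in $\mathbf{I}_2$. Since $\mathbf{I}_1$ is $\mathcal{SP}$-maximal it cannot be strictly contained in $\mathbf{I}_2$, so I start by choosing some $A_0 \in \mathbf{I}_1 \setminus \mathbf{I}_2$. Next I use the maximality of $\mathbf{I}_2$ to produce an obstruction to adjoining $A_0$: arguing exactly as in the proof of Lemma~\ref{l3.12}, if $A_0 \cup B \in \mathcal{SP}$ held for every $B \in \mathbf{I}_2$, then the collection
$$
\mathbf{J} := \{X \subseteq \mathbb R^+ : X \subseteq A_0 \cup B \text{ for some } B \in \mathbf{I}_2\}
$$
would be an ideal on $\mathbb R^+$ with $\mathbf{I}_2 \subsetneq \mathbf{J} \subseteq \mathcal{SP}$, contradicting $\mathcal{SP}$-maximality of $\mathbf{I}_2$. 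Hence there exists $B \in \mathbf{I}_2$ with $A_0 \cup B \notin \mathcal{SP}$.

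I then apply Lemma~\ref{l3.9} to $A_0 \cup B$ to obtain $q>1$ such that $(A_0 \cup B)(q) \cap (0,1) = (0,1)$. Since the $q$-blow up distributes over unions (immediate from~\eqref{e3.4}), this gives $(0,1) \subseteq A_0(q) \cup B(q)$. I now set $A := A_0(q)$. By Lemma~\ref{l3.12}, from $A_0 \in \mathbf{I}_1$ and $B \in \mathbf{I}_2$ I obtain $A \in \mathbf{I}_1$ and $B(q) \in \mathbf{I}_2$, so the candidate set $A$ satisfies the first half of the required conclusion.

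It remains to verify that $\mathbb R^+ - A \in \mathbf{I}_2$, which I would do by exhibiting an element of $\mathbf{I}_2$ that contains it. Splitting $\mathbb R^+ - A$ according to the partition $\mathbb R^+ = \{0\} \cup (0,1) \cup [1,\infty)$ and using $(0,1) \subseteq A \cup B(q)$ yields
$$
\mathbb R^+ - A \;\subseteq\; \{0\} \cup B(q) \cup [1,\infty).
$$
The set $\{0\} \cup [1,\infty)$ is bounded away from $0$, so unioning it with any element of $\mathcal{SP}$ preserves strong porosity at $0$; by Corollary~\ref{c3.3} it lies in $\hat I(\mathcal{SP})$, and therefore in $\mathbf{I}_2$. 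Combined with $B(q) \in \mathbf{I}_2$ and closure of $\mathbf{I}_2$ under finite unions and subsets, this gives $\mathbb R^+ - A \in \mathbf{I}_2$, completing the proof.

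The main conceptual obstacle is to resist the temptation of using the chosen $A_0$ itself; its complement will typically be much too large to belong to $\mathbf{I}_2$. The right move is the $q$-blow up, because Lemma~\ref{l3.9} is tailor-made to convert ``$A_0 \cup B$ is not strongly porous at $0$'' into the very strong geometric statement that $(A_0 \cup B)(q)$ covers an entire one-sided neighborhood of $0$, which is exactly what is needed to squeeze $\mathbb R^+ - A$ into a member of $\mathbf{I}_2$. Once this is seen, the remaining steps are routine ideal-theoretic bookkeeping.
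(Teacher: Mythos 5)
Your proposal is correct and takes essentially the same route as the paper: find $A_1 \in \mathbf{I}_1$, $A_2 \in \mathbf{I}_2$ with $A_1 \cup A_2 \notin \mathcal{SP}$, apply Lemma~\ref{l3.9} to get a $q$-blow up covering $(0,1)$, and use Lemma~\ref{l3.12} to keep the blown-up sets inside their respective ideals. The only cosmetic differences are that the paper produces the witness pair via the operation $\mathbf{I}_1 \vee \mathbf{I}_2 := \{A_1 \cup A_2 : A_i \in \mathbf{I}_i\}$ rather than your pick-$A_0$-then-extend argument, and that the paper defines $A := A_1(q) \cup \{0\} \cup [1,\infty)$ so that $\mathbb R^+ - A \subseteq A_2(q)$ falls out directly, whereas you take $A := A_0(q)$ and separately dispatch $\{0\} \cup [1,\infty)$ via $\hat I(\mathcal{SP}) \subseteq \mathbf{I}_2$; both handle that tail set in the same underlying way.
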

\begin{proof}
Write
$$
\mathbf{I}_1 \vee \mathbf{I}_2:= \{A_1 \cup A_2: A_1 \in \mathbf{I}_1,\  A_2 \in \mathbf{I}_2\}.
$$
It is easy to see that
\begin{equation}\label{t3.13e1}
\mathbf{I}_1 \subseteq \mathbf{I}_1 \vee \mathbf{I}_2 \text{ and } \mathbf{I}_2 \subseteq \mathbf{I}_1 \vee \mathbf{I}_2.
\end{equation}
If $\mathbf{I}_1 \vee \mathbf{I}_2 \subseteq \mathcal{SP}$, then $\mathbf{I}_1 \vee \mathbf{I}_2$ is an ideal and from~\eqref{t3.13e1} it follow that
\begin{equation}\label{t3.13e2}
\mathbf{I}_1 = \mathbf{I}_1 \vee \mathbf{I}_2 = \mathbf{I}_2
\end{equation}
because $\mathbf{I}_1$ and $\mathbf{I}_2$ are $\mathcal{SP}$--maximal ideals. The supposition $\mathbf{I}_1 \neq \mathbf{I}_2$ contradicts~\eqref{t3.13e2}. Consequently we can find $A_1 \in \mathbf{I}_1$ and $A_2 \in \mathbf{I}_2$ such that
$$
A_1 \cup A_2 \notin \mathcal{SP}.
$$
Now using Lemma~\ref{l3.9} we can find $q>1$ for which
\begin{equation}\label{t3.13e3}
(A_1 \cup A_2)(q) \supseteq (0, 1).
\end{equation}
Write
\begin{equation}\label{t3.13e4}
A := A_1(q) \cup \{0\} \cup [1, \infty).
\end{equation}
By Lemma~\ref{l3.12} we have $A_1(q) \in \mathbf{I}_1$. Since $\{0\} \in \mathbf{I}_1$ and $[t, \infty) \in \mathbf{I}_1$, we also have $A \in \mathbf{I}_1$. Similarly $A_2(q) \in \mathbf{I}_2$ holds. From~\eqref{t3.13e3} and~\eqref{t3.13e4} we obtain that
$$
A \subseteq A_1(q) \cup A_2(q) \cup \{0\} \cup [1, \infty) = \mathbb R^{+}.
$$
Since
$$
A \cap (A_2(q)-A)=\varnothing \text{ and } A \cup (A_2(q)-A)= A \cup A_2(q) = \mathbb R^{+},
$$
the equality
$$
A_2(q)-A = \mathbb R^{+} - A
$$
holds. From $A_2(q) \in \mathbf{I}_2$ and $A_2(q)-A \subseteq A_2(q)$ it follows that
$$
\mathbb R^{+} - A \in \mathbf{I}_2.
$$
This finishes the proof.
\end{proof}

\section{The lower porosity at zero}

The notion of lower porosity at~$0$ is similar to the notion of upper porosity at~$0$.

\begin{definition}\label{d4.1}
Let $E \subseteq \mathbb R^+$. The right lower porosity of $E$ at~$0$ is the number
$$
\underline{p}(E) = \liminf_{h\to 0+} \frac{\lambda(E, h)}{h},
$$
where $\lambda(E, h)$ is the same as in Definition~\ref{d1}.
\end{definition}

We will say that $E \subseteq \mathbb R^+$ is lower porous at~$0$ if $\underline{p}(E) > 0$. The sets $E \subseteq \mathbb R^+$ with $\underline{p}(E)=0$ will be called lower nonporous at~$0$. It should be noted that $\underline{p}(E) > \frac{1}{2}$ holds if and only if $0 \notin \overline{E-\{0\}}$ (See, for example, Corollary 5.5 in~\cite{ADK}). It does not therefore make sense to introduce the concept of strongly lower porous at~$0$ sets.

The following characteristic property of lower porous at~$0$ sets is occasionally useful.

\begin{lemma}\label{l4.2}
Let $E \subseteq \mathbb R^+$. Then $E$ is lower porous at~$0$ if and only if there are $h_0>0$ and $p_0>0$ such that the equality
\begin{equation}\label{l4.2e1}
\lambda(E, h) > hp_0
\end{equation}
holds for every $h \in (0, h_0)$.
\end{lemma}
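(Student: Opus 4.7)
The plan is to unwind the definition of $\liminf$ in both directions; this is really just a translation lemma, so there is no deep obstacle to overcome, only the routine pedantry about strict versus non-strict inequalities.

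For the sufficiency direction, assume $h_0>0$ and $p_0>0$ exist with $\lambda(E,h)>hp_0$ for all $h\in(0,h_0)$. Then $\frac{\lambda(E,h)}{h}>p_0$ on $(0,h_0)$, so
$$
\underline{p}(E)=\liminf_{h\to 0^+}\frac{\lambda(E,h)}{h}\geq p_0>0,
$$
and $E$ is lower porous at~$0$.

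For the necessity direction, let $p:=\underline{p}(E)>0$ and fix any $p_0\in(0,p)$. By the definition of $\liminf$ as $\sup_{\delta>0}\inf_{h\in(0,\delta)}\frac{\lambda(E,h)}{h}$, there exists $h_0>0$ such that $\inf_{h\in(0,h_0)}\frac{\lambda(E,h)}{h}\geq p_0$. Hence $\lambda(E,h)\geq hp_0$ for every $h\in(0,h_0)$. To upgrade to the strict inequality required by the statement, replace $p_0$ by $p_0/2$; then $\lambda(E,h)\geq hp_0>h(p_0/2)$ on $(0,h_0)$, which is the claim with the new constant.

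The only step that needs a moment of care is the passage from the $\liminf$ definition to a uniform lower bound on a punctured neighbourhood of~$0$; the trick of shrinking $p_0$ slightly (e.g.\ halving it) is what lets us convert the $\geq$ coming out of $\liminf$ into the strict $>$ demanded in~\eqref{l4.2e1}. No further machinery from the preceding sections is needed.
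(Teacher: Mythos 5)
Your proof is correct and follows essentially the same route as the paper's: both directions are just a direct unwinding of the $\liminf$ definition, with the paper phrasing the necessity direction contrapositively and calling sufficiency ``evident,'' while you spell both out explicitly. The halving of $p_0$ at the end is actually unnecessary (since $\underline{p}(E)>p_0$, one already gets a $\delta$ with $\inf_{h\in(0,\delta)}\lambda(E,h)/h>p_0$, hence the strict inequality directly), but it does no harm.
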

\begin{proof}
If, for every pair of positive $h_0$ and $p_0$, there is $h \in (0, h_0)$ such that
$$
\lambda(E, h) \leq hp_0,
$$
then the equality $\underline{p}(E)=0$ follows from Definition~\ref{d4.1}. Hence the equality $\underline{p}(E)>0$ implies the existence of $h_0$, $p_0$ for which~\eqref{l4.2e1} holds for every $h \in (0, h_0)$. The converse is evident.
\end{proof}

\begin{remark}\label{r4.3}
In fact for every $p_0 \in (0, \underline{p}(E))$, there is $h_0 = h_0(p_0)>0$ such that~\eqref{l4.2e1} holds for every $h \in (0, h_0)$.
\end{remark}

\begin{lemma}\label{l4.4}
Let $E \subseteq \mathbb R^+$ be lower porous at~$0$, let $p_0 \in (0, \underline{p}(E))$ and let
$$
q \in \left(1, \frac{1}{(1-p_0)^{1/2}}\right).
$$
Then the $q$-blow up $E(q)$ is also lower porous at~$0$.
\end{lemma}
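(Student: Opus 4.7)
The plan is to exploit Remark~\ref{r4.3} to fix $h_0>0$ with $\lambda(E, h) > hp_0$ for every $h \in (0, h_0)$, and then, via Lemma~\ref{l4.2}, to produce a positive constant $c_0$ and a threshold $h_1 > 0$ such that $\lambda(E(q), h') > h' c_0$ whenever $h' \in (0, h_1)$. The natural ansatz is: given $h'>0$, rescale to $h := qh'$, select a gap $(a,b) \subseteq (0,h)\setminus E$ of length $> hp_0$ provided by the hypothesis on $E$, and then take $I := (qa, q^{-1}b)$ as the candidate gap of $E(q)$ sitting inside $(0, h')$.

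Once $h' < h_0/q$, so that $h < h_0$, both containments $I \subseteq (0, h')$ (from $b \leq h = qh'$) and $I \cap E(q) = \varnothing$ are immediate to verify: any $y \in I$ of the form $y \in (q^{-1}x, qx)$ with $x \in E$ would satisfy $qa < y < qx$ and $q^{-1}x < y < q^{-1}b$, forcing $x \in (a, b)$, contrary to $(a, b) \cap E = \varnothing$.

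The crux is the length estimate. Since $b \leq h$ and $b - a > hp_0$, one has $a < h(1 - p_0)$, and therefore
\[
b - q^2 a = (b - a) - (q^2 - 1) a > h\bigl[p_0 - (q^2 - 1)(1 - p_0)\bigr].
\]
The hypothesis $q < (1 - p_0)^{-1/2}$ rearranges to $(q^2 - 1)(1 - p_0) < p_0$, so the bracketed number is a strictly positive constant $c_0$ independent of $h'$. Consequently the length of $I$ exceeds $q^{-1} h c_0 = h' c_0$, giving $\lambda(E(q), h') > h' c_0$ on $(0, h_0/q)$; Lemma~\ref{l4.2} then finishes the proof.

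The only step requiring genuine care is the calibration between $q$ and $p_0$: the shrinking factor $q^2$ multiplying $a$ must remain below the ratio $b/a$, and the worst admissible configuration $a \to h(1-p_0)^-$, $b \to h^-$ forces exactly $q^2 < 1/(1-p_0)$. Without this matching of constants, the interval $I$ can degenerate to the empty set and the whole argument collapses; with it, the positivity of $c_0$ is automatic.
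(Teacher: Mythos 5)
Your proof is correct and follows essentially the same route as the paper: fix $h_0$ via Remark~\ref{r4.3}, take a largest gap $(a,b)$ of $E$ in $(0,h)$, pass to the shrunken interval $(qa,q^{-1}b)\subseteq(0,h)\setminus E(q)$, and check that the constraint $q<(1-p_0)^{-1/2}$ makes its relative length a positive constant. The only cosmetic difference is your rescaling $h=qh'$, which the paper avoids by noting $q^{-1}b< b\leq h$ directly, and your constant $c_0 = 1-q^2(1-p_0)$ is just $q$ times the paper's $t_0 = q^{-1}-q(1-p_0)$.
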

\begin{proof}
By Lemma~\ref{l4.2}, it suffices to show that there are $t_0>0$ and $h_0>0$ such that
\begin{equation}\label{l4.4e1}
\lambda(E(q), h) > h t_0
\end{equation}
for all $h \in (0, h_0)$.

Since $E$ is lower porous at~$0$, there is $h_0>0$ such that
\begin{equation}\label{l4.4e2}
\lambda(E, h) > hp_0
\end{equation}
for every $h \in (0, h_0)$. (See Remark~\ref{r4.3}). Let $(a,b)$ be an interval in $((0,h) - E)$ with
$$
h \in (0, h_0) \text{ and } b-a = \lambda(E,h).
$$
Inequality~\eqref{l4.4e2} and the inequality $b \leq h$ imply
\begin{multline}\label{l4.4e3}
q^{-1}b - qa = (q^{-1}-q)b + q(b-a) = (q^{-1}-q)b + q\lambda(E,h) \\
> (q^{-1}-q)h + qhp_0 = h(q^{-1}-q(1-p_0))
\end{multline}
for every $h \in (0, h_0)$. Write
$$
t_0:=q^{-1}-q(1-p_0).
$$
Then it follows from $q \in \left(1, \frac{1}{(1-p_0)^{1/2}}\right)$ that $t_0>0$. The inequality $t_0>0$ and~\eqref{l4.4e3} show, in particular, that
$$
q^{-1}b>qa.
$$
Since the open interval $(a,b)$ lies in $((0,h) - E)$, for the open interval $(qa, q^{-1}b)$ we obtain the inclusion
$$
(qa, q^{-1}b) \subseteq ((0,h) - E(q)).
$$
Thus
$$
\lambda(E(q), h) \geq q^{-1}b-qa > h t_0
$$
holds for every $h \in (0, h_0)$.
\end{proof}

\begin{lemma}\label{l4.5}
Let $E \subseteq \mathbb R^+$ be strongly porous at~$0$. Then $(\mathbb R^+ - E)$ is lower nonporous at~$0$.
\end{lemma}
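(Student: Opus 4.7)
The plan is to unpack the definition of strongly porous at~$0$ to get a sequence $(h_n)$ along which $E$ has a very large gap in $(0, h_n)$, and then show that this very gap forces every open subinterval of $(0, h_n)$ that is contained in $E$ to be short relative to $h_n$, which in turn gives $\lambda(\mathbb R^+-E, h_n)/h_n \to 0$.

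First I would choose a sequence $(h_n)_{n\in\mathbb N} \in UMP(E)$ and, for each $n$, let $(a_n, b_n) \subseteq (0, h_n)$ be the largest open subinterval disjoint from $E$, so that $b_n - a_n = \lambda(E, h_n)$ and $0 \leq a_n < b_n \leq h_n$. Since $\overline{p}(E)=1$, the identity
\begin{equation*}
\frac{a_n}{h_n} + \frac{h_n - b_n}{h_n} + \frac{b_n - a_n}{h_n} = 1
\end{equation*}
together with $(b_n-a_n)/h_n \to 1$ forces both $a_n/h_n \to 0$ and $(h_n - b_n)/h_n \to 0$.

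The key step is then to establish the bound $\lambda(\mathbb R^+-E, h_n) \leq \max(a_n,\, h_n-b_n)$. For this, observe that any open interval $(c,d) \subseteq (0, h_n)$ containing no point of $\mathbb R^+-E$ must satisfy $(c,d) \subseteq E$. Since $(a_n, b_n) \subseteq \mathbb R^+-E$, such $(c,d)$ is disjoint from $(a_n, b_n)$, and by connectedness lies either in $(0, a_n]$ or in $[b_n, h_n)$; hence its length is at most $\max(a_n,\, h_n - b_n)$.

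Finally, dividing this bound by $h_n$ and letting $n\to\infty$ gives
\begin{equation*}
\liminf_{h\to 0+} \frac{\lambda(\mathbb R^+-E, h)}{h} \leq \lim_{n\to\infty} \frac{\lambda(\mathbb R^+-E, h_n)}{h_n} = 0,
\end{equation*}
so $\underline{p}(\mathbb R^+-E)=0$, i.e.\ $\mathbb R^+-E$ is lower nonporous at~$0$. I do not expect a real obstacle: the argument is essentially a bookkeeping check on the single large gap produced by strong porosity, and the only mildly subtle point is the connectedness observation used to bound $\lambda(\mathbb R^+-E, h_n)$.
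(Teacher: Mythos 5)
Your argument is correct and complete: you extract a sequence $(h_n)\in UMP(E)$, show the single large gap $(a_n,b_n)$ in $(0,h_n)$ leaves only margins of size $a_n$ and $h_n-b_n$ for intervals inside $E$, and both margins vanish relative to $h_n$ because $(b_n-a_n)/h_n\to 1$. The paper declares this lemma ``straightforward'' and omits the proof, so there is nothing to compare against, but your write-up is exactly the bookkeeping the authors presumably had in mind; the only point worth a half-sentence in a final version is that $\lambda(E,h_n)>0$ (hence the interval $(a_n,b_n)$ actually exists) for all sufficiently large $n$, which follows from $\lambda(E,h_n)/h_n\to 1>0$.
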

The proof is straightforward so that we omit it here.

\begin{theorem}\label{t4.6}
Let $E$ be a subset of\/ $\mathbb R^+$. Then $E$ is lower nonporous at~$0$ if and only if the set $(\mathbb R^+ - E(q))$ is strongly porous at~$0$ for every $q>1$, where $E(q)$ is the $q$-blow up of $E$.
\end{theorem}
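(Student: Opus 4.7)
The plan is to prove the two implications separately, with the forward direction requiring a geometric translation from gaps in $E$ to coverage by $E(q)$, and the converse being a short pigeonhole-style argument using Lemma~\ref{l4.4}.

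For the forward direction, assume $\underline{p}(E)=0$ and fix an arbitrary $q>1$; I want to show $\overline{p}(\mathbb{R}^+-E(q))=1$. Pick a sequence $h_n\to 0^+$ with $\lambda(E,h_n)/h_n\to 0$. The key geometric observation is: if $z\in(0,h_n/q)$ and $z\notin E(q)$, then by definition of the $q$-blow up we must have $E\cap(q^{-1}z,qz)=\varnothing$, so $(q^{-1}z,qz)$ is an open subinterval of $(0,h_n)$ disjoint from $E$; hence $(q-q^{-1})z\le\lambda(E,h_n)$. Contrapositively, every $z$ with $\lambda(E,h_n)/(q-q^{-1})<z<h_n/q$ lies in $E(q)$. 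Setting $h_n':=h_n/q$, the interval $\bigl(\lambda(E,h_n)/(q-q^{-1}),\,h_n'\bigr)$ is an open subinterval of $(0,h_n')$ containing no point of $\mathbb{R}^+-E(q)$, so
$$
\frac{\lambda(\mathbb{R}^+-E(q),h_n')}{h_n'}\ \ge\ 1-\frac{q}{q-q^{-1}}\cdot\frac{\lambda(E,h_n)}{h_n},
$$
and the right-hand side tends to $1$. Since the left-hand side is also bounded above by $1$, this forces $\overline{p}(\mathbb{R}^+-E(q))=1$.

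For the converse, I argue by contrapositive: assume $E$ is lower porous at~$0$, and produce some $q>1$ with $\mathbb{R}^+-E(q)$ \emph{not} strongly porous at~$0$. By Lemma~\ref{l4.4}, for every $p_0\in(0,\underline{p}(E))$ and every $q\in(1,(1-p_0)^{-1/2})$ the set $E(q)$ is again lower porous at~$0$. Fix such a $q$, and pick $c>0$ and $h_0>0$ via Lemma~\ref{l4.2} so that $\lambda(E(q),h)>ch$ for all $h\in(0,h_0)$. The point is that within $(0,h)$ the two quantities $\lambda(E(q),h)$ and $\lambda(\mathbb{R}^+-E(q),h)$ are realized by disjoint open subintervals of $(0,h)$ (one lies in $E(q)$, the other lies in its complement), so the sum of their lengths is at most $h$. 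Consequently
$$
\lambda(\mathbb{R}^+-E(q),h)\ \le\ h-\lambda(E(q),h)\ <\ h(1-c)\qquad\text{for every }h\in(0,h_0),
$$
giving $\overline{p}(\mathbb{R}^+-E(q))\le 1-c<1$, so $\mathbb{R}^+-E(q)$ is not strongly porous at~$0$.

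The routine part is the converse, which is essentially a disjointness and bookkeeping estimate once Lemma~\ref{l4.4} is invoked. The main obstacle is the forward direction: sparseness of $E$ (measured by $\lambda(E,h_n)/h_n\to 0$) does not obviously produce long intervals inside $E(q)$, and the trick is the ``expansion'' observation that a point outside $E(q)$ forces a proportionally large $E$-gap around it. I do not expect any technical issue beyond choosing the scale $h_n'=h_n/q$ so that the relevant interval is genuinely contained in $(0,h_n')$.
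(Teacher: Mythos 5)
Your proof is correct, and it follows the same broad strategy as the paper's argument, but with a noticeably cleaner execution of the forward direction and a more self-contained converse. For the forward implication the paper works with a sequence $p_n \in \overline{E}$ close to $h_n$, then defines $s_n$ as the left endpoint of the connected component of $E(q)\cap(0,p_n]$ ending at $p_n$, and argues that $(s_n, qs_n)\cap E = \varnothing$ forces $s_n/p_n\to 0$. Your observation --- that $z\notin E(q)$ with $qz<h_n$ forces the gap $(q^{-1}z,qz)$ in $E$, so $z\le \lambda(E,h_n)/(q-q^{-1})$, and thus $(\lambda(E,h_n)/(q-q^{-1}),\,h_n/q)\subseteq E(q)$ --- encapsulates the same geometric idea but makes the quantitative bound $\overline{p}(\mathbb R^+-E(q))\ge 1 - \tfrac{q}{q-q^{-1}}\cdot\tfrac{\lambda(E,h_n)}{h_n}$ immediate, avoiding the intermediate scaffolding $t_n,p_n,s_n$ and the subsequence contradiction. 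For the converse, both you and the paper invoke Lemma~\ref{l4.4} to produce $q>1$ with $E(q)$ lower porous; the paper then cites Lemma~\ref{l4.5} (that the complement of a strongly porous set is lower nonporous), whereas you prove the needed instance of that lemma inline via the disjoint-intervals estimate $\lambda(E(q),h)+\lambda(\mathbb R^+-E(q),h)\le h$, which gives the explicit bound $\overline{p}(\mathbb R^+-E(q))\le 1-c<1$. The two approaches buy slightly different things: the paper's proof modularizes via Lemma~\ref{l4.5}, while yours is shorter and transparently quantitative.
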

\begin{proof}
Let $E$ be lower nonporous at~$0$. Then there is a sequence $(h_n)_{n \in \mathbb N}$ of positive real numbers such that $\lim_{n\to \infty}h_n=0$ and
\begin{equation}\label{t4.6e1}
\lim_{n\to \infty} \frac{\lambda(E, h_n)}{h_n}=0.
\end{equation}
For every $n \in \mathbb N$ denoted by $t_n$ the point closest to $h_n$ on the set $[0, h_n] \cap \overline{E}$. Equality~\eqref{t4.6e1} implies
$$
\lim_{n\to \infty} \frac{t_n}{h_n}=1.
$$
Consequently there is a sequence $(p_n)_{n \in \mathbb N}$ such that
\begin{equation}\label{t4.6e2}
p_n \in [0, h_n] \cap \overline{E}
\end{equation}
for every $n \in \mathbb N$ and
$$
\lim_{n\to \infty} \frac{p_n}{t_n}=1.
$$
It follows from~\eqref{t4.6e2} that
$$
\lambda(E, p_n) \leq \lambda(E, h_n)
$$
holds for every $n \in \mathbb N$. Hence we have
\begin{equation}\label{t4.6e3}
\lim_{n\to \infty} \frac{\lambda(E, p_n)}{p_n}=0.
\end{equation}
Let $q>1$ and $n \in \mathbb N$. Denote by $s_n$ the point of $[0, p_n]$ for which the set $(s_n, p_n]$ is a connected component of $E(q) \cap (0, p_n]$. We claim that
\begin{equation}\label{t4.6e4}
(s_n, qs_n) \cap E = \varnothing
\end{equation}
holds for every $n \in \mathbb N$. Indeed, if there is $a_n \in E$ such that
$$
a_n \in (s_n, qs_n),
$$
then by the definition of the $q$-blow up of $E$ we obtain
\begin{equation}\label{t4.6e5}
(q^{-1}a_n, qa_n) \subseteq E(q).
\end{equation}
Since
$$
q^{-1}a_n < q^{-1}qs_n = s_n,
$$
inclusion~\eqref{t4.6e5} implies
$$
(q^{-1}a_n, p_n] \subseteq E(q),
$$
contrary to the definition of $s_n$.

To prove the statement
\begin{equation}\label{t4.6e6}
(\mathbb R^+ - E(q)) \in \mathcal{SP}
\end{equation}
it suffices to show the limit relation
\begin{equation}\label{t4.6e7}
\lim_{n\to \infty} \frac{s_n}{p_n}=0.
\end{equation}
If~\eqref{t4.6e7} does not hold, then there are $c \in (0,1)$ and an increasing sequence $(n_k)_{k \in \mathbb N}$ of natural numbers such that
\begin{equation}\label{t4.6e8}
\lim_{n\to \infty} \frac{s_{n_k}}{p_{n_k}}=c.
\end{equation}
Equality~\eqref{t4.6e4} implies that
$$
\lambda(E, p_n) \geq (q-1)s_n.
$$
The last inequality and~\eqref{t4.6e8} give us
$$
\limsup_{k\to \infty} \frac{\lambda(E, p_{n_k})}{p_{n_k}} \geq (q-1)\limsup_{k\to \infty} \frac{s_{n_k}}{p_{n_k}} = c(q-1).
$$
Since $c \in (0,1)$ and $q>1$, it contradicts~\eqref{t4.6e3}. Limit relation~\eqref{t4.6e7} follows.

Let~\eqref{t4.6e6} hold for every $q>1$. We must prove that $E$ is lower porous at~$0$. Suppose that, on the contrary,
$$
\underline{p}(E)=\liminf_{h\to \infty} \frac{\lambda(E, h)}{h} > 0.
$$
By Lemma~\ref{l4.4} there is $q>1$ such that $E(q)$ is also lower porous at~$0$, i.e.~$\underline{p}(E(q))>0$. It is clear that
\begin{equation}\label{t4.6e9}
E(q) = (\mathbb R^+-(\mathbb R^+-E(q))).
\end{equation}
Since $\mathbb R^+-E(q) \in \mathcal{SP}$, Lemma~\ref{l4.5} and~\eqref{t4.6e9} imply that $E(q)$ is lower nonporous at~$0$, contrary to $\underline{p}(E(q))>0$.
\end{proof}

\begin{lemma}\label{l4.7}
Let $E \subseteq \mathbb R^+$ be porous at~$0$, let $p_0 \in (0, \overline{p}(E))$ and let
$$
q \in \left(1, \frac{1}{(1-p_0)^{1/2}}\right).
$$
Then the $q$-blow up $E(q)$ is also porous at~$0$.
\end{lemma}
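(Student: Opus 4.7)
The plan is to imitate the proof of Lemma~\ref{l4.4} almost verbatim, replacing the $\liminf$--based argument with a sequential $\limsup$ argument. Since $p_0 < \overline{p}(E) = \limsup_{h\to 0^+}\lambda(E,h)/h$, I can extract a sequence $(h_n)_{n \in \mathbb N}$ of positive reals with $h_n \to 0^+$ and
$$
\frac{\lambda(E, h_n)}{h_n} > p_0
$$
for every $n$. For each $n$, I choose an open interval $(a_n, b_n) \subseteq (0, h_n)\setminus E$ whose length exceeds $h_n p_0$ (such an interval exists because $\lambda(E,h_n)$ is, by definition, the supremum of lengths of such intervals, and $\lambda(E,h_n) > h_n p_0$).

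The core computation is then identical to that in Lemma~\ref{l4.4}: using $b_n \leq h_n$ and $b_n - a_n > h_n p_0$,
\begin{equation*}
q^{-1}b_n - q a_n \;=\; (q^{-1}-q)b_n + q(b_n - a_n) \;>\; (q^{-1}-q)h_n + q h_n p_0 \;=\; h_n\bigl(q^{-1} - q(1-p_0)\bigr).
\end{equation*}
Set $t_0 := q^{-1}-q(1-p_0)$. The constraint $q \in \bigl(1, (1-p_0)^{-1/2}\bigr)$ is exactly what guarantees $t_0 > 0$. In particular $qa_n < q^{-1}b_n$, so the open interval $(qa_n, q^{-1}b_n)$ is nontrivial.

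Next I verify that $(qa_n, q^{-1}b_n) \subseteq (0, h_n) \setminus E(q)$. By definition $E(q)$ is the union, over $x \in E$, of the intervals $(q^{-1}x, qx)$. If some $x \in E$ satisfied $(q^{-1}x, qx) \cap (qa_n, q^{-1}b_n) \neq \varnothing$, then a brief check forces $x \in (a_n, b_n)$, contradicting $(a_n, b_n)\cap E = \varnothing$. Therefore
$$
\lambda(E(q), h_n) \;\geq\; q^{-1}b_n - qa_n \;>\; h_n t_0,
$$
and consequently
$$
\overline{p}(E(q)) \;=\; \limsup_{h\to 0^+} \frac{\lambda(E(q), h)}{h} \;\geq\; \limsup_{n\to\infty} \frac{\lambda(E(q), h_n)}{h_n} \;\geq\; t_0 \;>\; 0,
$$
which shows $E(q)$ is porous at~$0$.

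There is no real obstacle here; the argument is structurally the same as Lemma~\ref{l4.4}. The only point that requires a small adjustment is that $\overline{p}(E)$ is a $\limsup$ rather than a $\liminf$, so instead of obtaining the interval $(a_n,b_n)$ uniformly for all small $h$, I only get it along a suitable sequence $h_n \to 0^+$. This is harmless because showing $\overline{p}(E(q)) > 0$ only needs the inequality $\lambda(E(q),h)/h > t_0$ along \emph{some} sequence tending to $0$, not for all sufficiently small $h$.
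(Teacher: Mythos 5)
Your argument is correct and is exactly the ``simple modification of the proof of Lemma~\ref{l4.4}'' that the paper alludes to without spelling out: the algebraic estimate $q^{-1}b_n - qa_n > h_n(q^{-1}-q(1-p_0))$ and the check that $(qa_n,q^{-1}b_n)$ misses $E(q)$ are unchanged, and you correctly replace the uniform ``for all small $h$'' step (available from $\underline{p}(E)>p_0$) by a sequential one (available from $\overline{p}(E)>p_0$), which is all that is needed to bound the $\limsup$ from below. No gaps.
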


The proof of this lemma can be obtained by a simple modification of the proof of Lemma~\ref{l4.4}.

To formulate the next lemma we need a generalization of the concept of $q$-blow up of sets.

Let $A \subseteq \mathbb R^+$, let $\tilde{\Gamma}=(\Gamma_n)_{n \in \mathbb N}$ be a sequence of subsets of $A$ and let $\tilde{q}=(q_n)_{n \in \mathbb N}$ be a sequence of real numbers in $(1, \infty)$.

\begin{definition}\label{d4.8}
The $(\tilde{\Gamma},\tilde{q})$-blow up of $A$ is the set
$$
A(\tilde{\Gamma},\tilde{q}):=\bigcup_{i=1}^\infty \Gamma_n(q_n),
$$
where $\Gamma_n(q_n)$ is the $q_n$-blow up of $\Gamma_n$.
\end{definition}

It is easy to set that for every $q>1$ and $A \subseteq \mathbb R^+$ we have
$$
A(q) = A(\tilde{\Gamma},\tilde{q})
$$
whenever $q_n=q$ and $\Gamma_n =A$ for every $n \in \mathbb N$.

\begin{lemma}\label{l4.9}
Let $A$ be a lower nonporous at~$0$ subset of $\mathbb R^+$. Then for every sequence $\tilde{q}=(q_n)_{n \in \mathbb N}$ with $\lim_{n\to \infty}q_n =1$ and $q_n \in (1, \infty)$ for each $n \in \mathbb N$, there is a sequence $((a_n, b_n))_{n \in \mathbb N}$ of open intervals such that:
\begin{enumerate}
\item[$(i)$] $\lim_{n\to \infty} a_n = 0$ and $0<b_{n+1} < a_n < b_n$ for every $n \in \mathbb N$;
\item[$(ii)$] The set $(\mathbb R^+ - A(\tilde{\Gamma},\tilde{q}))$ is strongly porous at~$0$ for $\tilde{\Gamma} = (\Gamma_n)_{n \in \mathbb N}$ with
$$
\Gamma_n:=(a_n, b_n) \cap A, \quad n \in \mathbb N.
$$
\end{enumerate}
\end{lemma}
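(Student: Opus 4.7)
The strategy is to build $(a_n, b_n)$ inductively, using the lower-nonporosity hypothesis to guarantee that $A$ is extremely dense at the scales $b_n$, and then to choose $a_n$ so that the $A$-gaps inside $(a_n,b_n)$ are small enough relative to $q_n - 1$ that the blow-up $\Gamma_n(q_n)$ covers essentially all of $(a_n, b_n)$. Since $q_n \to 1^+$, I would fix auxiliary sequences $\varepsilon_n := (q_n-1)^2$ and $\alpha_n := q_n(q_n-1)$, both tending to $0$, and satisfying $\alpha_n \geq q_n\varepsilon_n/(q_n-1)$. Because $\underline{p}(A)=0$, arbitrarily small $h$ exist with $\lambda(A,h)/h$ arbitrarily small, so I would recursively select $b_n \to 0$ with $\lambda(A, b_n) < \varepsilon_n b_n$ and $b_{n+1} < \alpha_n b_n$, and set $a_n := \alpha_n b_n$; then $0 < b_{n+1} < a_n < b_n$ and $a_n \to 0$, giving~(i).

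For~(ii) the key geometric observation is the following. Given $x \in (a_n + \varepsilon_n b_n/2,\ b_n - \varepsilon_n b_n/2)$, the open interval of length $\varepsilon_n b_n$ centred at $x$ lies inside $(a_n, b_n) \subseteq (0, b_n)$ and is strictly longer than the largest $A$-gap in $(0, b_n)$, so it must contain some $y \in \Gamma_n = (a_n, b_n) \cap A$. Using $y > a_n \geq q_n\varepsilon_n b_n/(q_n-1)$ and $|x - y| < \varepsilon_n b_n/2$, a direct calculation would yield $q_n^{-1}y < x < q_n y$, so $x \in (q_n^{-1}y, q_n y) \subseteq \Gamma_n(q_n)$. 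This would establish
\[
(a_n + \varepsilon_n b_n/2,\ b_n - \varepsilon_n b_n/2) \subseteq \Gamma_n(q_n) \subseteq A(\tilde\Gamma, \tilde q).
\]

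Setting $h_n := b_n - \varepsilon_n b_n/2$, the interval $(a_n + \varepsilon_n b_n/2, h_n)$ is a subinterval of $(0, h_n)$ disjoint from $\mathbb R^+ - A(\tilde\Gamma, \tilde q)$, giving
\[
\frac{\lambda(\mathbb R^+ - A(\tilde\Gamma, \tilde q), h_n)}{h_n} \geq \frac{h_n - a_n - \varepsilon_n b_n/2}{h_n} \longrightarrow 1,
\]
since $a_n/b_n = \alpha_n \to 0$, $\varepsilon_n \to 0$, and $h_n/b_n \to 1$. This would yield $\overline{p}(\mathbb R^+ - A(\tilde\Gamma, \tilde q))=1$, establishing~(ii).

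The main obstacle is balancing the conflict between $q_n \to 1$ (each blow-up is weak, covering only a relative neighbourhood of size $\approx q_n - 1$ around each point) and the task of still filling the whole annulus $(a_n, b_n)$. This will force $\varepsilon_n$ to go to zero strictly faster than $q_n - 1$, and will force the bound $a_n \gtrsim \varepsilon_n b_n/(q_n-1)$; the existence of $b_n$ satisfying $\lambda(A, b_n) < \varepsilon_n b_n$ at arbitrarily small scales is precisely what $\underline{p}(A)=0$ delivers, so the recursion closes.
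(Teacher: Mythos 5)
Your proof is correct, and it takes a genuinely different route from the paper's. The paper derives Lemma~\ref{l4.9} as a quick corollary of Theorem~\ref{t4.6}: for each $q_n$ it invokes that theorem to get that $\mathbb R^+ - A(q_n)$ is strongly porous at~$0$, extracts a nested interval $(a_n,b_n)\subseteq A(q_n)$ with $a_n/b_n$ small, and leaves as ``simple estimates'' the step from $(a_n,b_n)\subseteq A(q_n)$ to most of $(a_n,b_n)$ lying in $\Gamma_n(q_n)$ (the point being that a blown-up point $y$ with $x\in(q_n^{-1}y,q_ny)$ and $x\in(q_na_n,q_n^{-1}b_n)$ must already lie in $(a_n,b_n)$). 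You instead bypass Theorem~\ref{t4.6} entirely: you go straight to $\underline p(A)=0$, pick scales $b_n$ where the largest $A$-gap has relative size $<\varepsilon_n=(q_n-1)^2$, and prove by a pointwise covering argument (every $x$ away from the endpoints has a $\Gamma_n$-point within $\varepsilon_nb_n/2$, hence is blown up onto $x$) that $(a_n+\varepsilon_nb_n/2,\,b_n-\varepsilon_nb_n/2)\subseteq\Gamma_n(q_n)$. This is a self-contained, more elementary argument, and it in effect re-proves a local version of the forward direction of Theorem~\ref{t4.6} inline; what the paper's route buys is economy (reuse of an already-established theorem) and a cleaner separation of concerns. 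Two small points to tidy: your claimed inequality $\alpha_n\geq q_n\varepsilon_n/(q_n-1)$ is in fact an equality, which is what you actually use; and $\alpha_n<1$ (indeed $\alpha_n+\varepsilon_n<1$, needed for the central interval to be nonempty and for $a_n<b_n$) holds only for $n$ large, since $q_n(q_n-1)<1$ requires $q_n$ below the golden ratio --- this is harmless because $q_n\to1$, but you should say explicitly that you run the construction from some $n_0$ onward and pad the earlier indices with arbitrary nested intervals bounded away from~$0$, which can only enlarge $A(\tilde\Gamma,\tilde q)$ and hence does not affect~(ii).
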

\begin{proof}
Let $(q_n)_{n \in \mathbb N}$ satisfy the condition of the lemma. By Theorem~\ref{t4.6}, the set $\mathbb R^+ - A(q_1)$ is strongly porous at~$0$. Consequently there is an open interval $(a_1, b_1)$ such that
$$
a_1>0, \quad (a_1, b_1) \cap (\mathbb R^+ - A(q_1)) = \varnothing \quad\text{and}\quad \frac{b_1-a_1}{b_1}<\frac{1}{2}.
$$
The set $\mathbb R^+ - A(q_2)$ is also strongly porous at~$0$. Hence we can find $(a_2, b_2)$ such that
$$
0<a_2<b_2<a_1, \ (a_2, b_2)\cap (\mathbb R^+ - A(q_2)) = \varnothing \text{ and } \frac{b_2-a_2}{b_2}<\frac{1}{2^2}.
$$
By induction on $n$, we can find an interval $(a_n, b_n)$ such that
$$
0<a_n<b_n<a_{n-1}, \ (a_n, b_n)\cap (\mathbb R^+ - A(q_n)) = \varnothing \text{ and } \frac{b_n-a_n}{b_n}<\frac{1}{2^n}.
$$
Simple estimates show that $((a_n, b_n))_{n \in \mathbb N}$ is the desired sequence.
\end{proof}

Using the last lemma we can completely describe the sets $A \subseteq \mathbb R^+$ for which the union $A \cup B$ is porous at~$0$ for every strongly porous at~$0$ set $B \subseteq \mathbb R^+$.

\begin{theorem}\label{t4.10}
The following statements are equivalent for every subset $A$ of\/ $\mathbb R^+$.
\begin{itemize}
\item[$(i)$] The set $A$ is lower porous at~$0$.
\item[$(ii)$] The union $A\cup B$ is porous at~$0$ for every strongly porous at~$0$ set $B\subseteq \mathbb R^+$.
\end{itemize}
\end{theorem}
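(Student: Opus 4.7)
I plan to prove each implication directly, handling $(ii)\Rightarrow(i)$ by contraposition. For $(i)\Rightarrow(ii)$, let $B\subseteq\mathbb R^+$ be strongly porous at~$0$. By Lemma~\ref{l4.2} fix $h_0,p_0>0$ with $\lambda(A,h)>p_0 h$ for every $h\in(0,h_0)$, and use $\overline{p}(B)=1$ to extract intervals $(c_n,d_n)\subseteq\mathbb R^+-B$ with $d_n\to 0^+$ and $c_n/d_n\to 0$. For $n$ so large that $d_n<h_0$ and $c_n/d_n<p_0/2$, choose an interval $(\alpha_n,\beta_n)\subseteq(0,d_n)-A$ of length greater than $p_0 d_n$. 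Since $(\alpha_n,\beta_n)$ and $(c_n,d_n)$ are sub-intervals of $(0,d_n)$ whose lengths sum to more than $d_n+(p_0 d_n-c_n)$, their intersection is a nonempty open interval of length at least $p_0 d_n-c_n>p_0 d_n/2$, contained in $(0,d_n)-(A\cup B)$. Therefore $\overline{p}(A\cup B)\ge p_0/2>0$.

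For the contrapositive of $(ii)\Rightarrow(i)$, suppose $A$ is lower nonporous at~$0$; I will build a strongly porous at~$0$ set $B$ with $\overline{p}(A\cup B)=0$. Choose $q_n>1$ with $q_n\to 1$ and apply Lemma~\ref{l4.9} to obtain intervals $(a_n,b_n)$ decreasing to $0$ such that $B:=\mathbb R^+-A(\tilde\Gamma,\tilde q)\in\mathcal{SP}$, where $\Gamma_n:=(a_n,b_n)\cap A$. Because $\mathbb R^+-A(q_n)\in\mathcal{SP}$ by Theorem~\ref{t4.6}, the inductive choice of $b_n$ in the proof of Lemma~\ref{l4.9} allows $b_n$ to be as small as we wish at each stage; in particular, we may additionally require $q_n b_n<a_{n-1}$, which forces the sets $\Gamma_n(q_n)\subseteq(q_n^{-1}a_n,q_n b_n)$ to be pairwise disjoint. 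Hence every connected component of $A(\tilde\Gamma,\tilde q)$ lies inside a single $\Gamma_n(q_n)$.

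The crux is the scaling bound $s\le q_n^2 r$ for any open sub-interval $(r,s)\subseteq A(\tilde\Gamma,\tilde q)-A$ contained in a component of $\Gamma_n(q_n)$. Indeed, if $s>q_n^2 r$, then for any $y\in(q_n r,q_n^{-1}s)\subseteq(r,s)$ there would exist $x\in\Gamma_n\subseteq A$ with $y\in(q_n^{-1}x,q_n x)$, so that $r<q_n^{-1}y<x<q_n y<s$, contradicting $(r,s)\cap A=\varnothing$. Because $\mathbb R^+-(A\cup B)=A(\tilde\Gamma,\tilde q)-A$, every gap $(r,s)$ of $A\cup B$ with $s\le h$ then satisfies $s-r\le(q_n^2-1)s\le(q_n^2-1)h$ whenever $h$ lies in or just above $(q_n^{-1}a_n,q_n b_n)$; when $h$ falls in the wide $B$-gap between two consecutive blow-ups, the same bound holds with $q_{n+1}$ in place of $q_n$. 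Letting $h\to 0^+$ drives $n\to\infty$, so $q_n\to 1$ and $\lambda(A\cup B,h)/h\to 0$, i.e.\ $A\cup B$ is nonporous at~$0$, contradicting~$(ii)$.

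The main obstacle is the contrapositive: one must arrange that the gaps of $A\cup B$, which live strictly inside the blow-ups $\Gamma_n(q_n)$, have vanishing relative length. This is precisely what the disjointness condition $q_n b_n<a_{n-1}$ together with the scaling bound $s\le q_n^2 r$ deliver; once these are in hand, the case analysis on the position of $h$ relative to the intervals $(q_n^{-1}a_n,q_n b_n)$ is routine because $q_n\to 1$.
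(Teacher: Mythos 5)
Your argument is correct in substance, and the second implication follows a genuinely different route from the paper's.

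For $(i)\Rightarrow(ii)$, your argument is essentially the paper's but cleaner: the paper works through $UMP(B)$ and Lemmas~\ref{l2.3}--\ref{l2.4} to build intervals $(s_n,h_n)\subseteq\mathbb R^+-B$ and $(t_n,p_n)\subseteq\mathbb R^+-A$ and then overlaps them, whereas you simply pick a near-full gap of $B$ and a definite-proportion gap of $A$ inside the same $(0,d_n)$ and invoke inclusion--exclusion to get a common gap of length $>p_0 d_n/2$. Both yield $\overline p(A\cup B)\ge p_0/2$. Your version avoids the $UMP$ machinery entirely.

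For $(ii)\Rightarrow(i)$ the paper's strategy is quite different: setting $D:=A\cup(\mathbb R^+-A(\tilde\Gamma,\tilde q))$ and assuming $D$ porous, it invokes Lemma~\ref{l4.7} to find $q>1$ with $D(q)$ porous, then shows $D(q)\supseteq(0,t^*)$ using the eventual bound $q_n<q$, which contradicts porosity of $D(q)$ since $\overline p((0,t^*))=0$. You instead estimate the gaps of $A\cup B$ directly, via the scaling bound $s\le q_n^2 r$ for any open interval $(r,s)\subseteq\Gamma_n(q_n)$ disjoint from $A$, which yields $\lambda(A\cup B,h)\le(1-q_n^{-2})h$; since the index $n$ carrying the largest gap below $h$ is forced to infinity as $h\to 0^+$, one concludes $\overline p(A\cup B)=0$. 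Your scaling argument is sound: if $s>q_n^2 r$ then any $y\in(q_n r,q_n^{-1}s)$ lies in $\Gamma_n(q_n)$, so some $x\in\Gamma_n\subseteq A$ satisfies $r<q_n^{-1}y<x<q_n y<s$, contradicting $(r,s)\cap A=\varnothing$. This is a more hands-on, self-contained alternative to the paper's blow-up trick, which buys transparency at the cost of a more delicate case analysis on the position of $h$; the paper's route reuses Lemma~\ref{l4.7} and avoids that bookkeeping.

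One small slip needs repair. You claim that $q_n b_n<a_{n-1}$ forces the sets $\Gamma_n(q_n)\subseteq(q_n^{-1}a_n,q_n b_n)$ to be pairwise disjoint, but disjointness of consecutive windows requires $q_n b_n\le q_{n-1}^{-1}a_{n-1}$, and since $q_{n-1}^{-1}a_{n-1}<a_{n-1}$ your stated condition does not guarantee this. The fix is trivial --- impose $b_n<a_{n-1}/(q_n q_{n-1})$ in the inductive construction, which is available exactly for the reason you cite (each $\mathbb R^+-A(q_n)$ is strongly porous, so $b_n$ may be taken as small as desired). Alternatively one can dispense with disjointness altogether and run the scaling argument with $q^*=\sup_{m\ge N(h)}q_m$ in place of $q_n$ when a gap happens to meet several blow-ups; since $q_m\to 1$ the conclusion is unchanged. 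Either repair closes the gap.
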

\begin{proof}
$(i) \Rightarrow (ii)$ Suppose that $A$ is lower porous at~$0$. Let $B$ an arbitrary strongly porous at~$0$ subset of $\mathbb R^+$ and let $(h_n)_{n \in \mathbb N} \in UMP(B)$. Using Lemma~\ref{l2.3} and Lemma~\ref{l2.4} we, without loss of generality, may assume that for every $n \in \mathbb N$ there is $s_n \in (0, h_n)$ such that
$$
(s_n, h_n) \subseteq \mathbb R^+ - B
$$
and
\begin{equation}\label{t4.10e1}
\lim_{n\to\infty} \frac{s_n}{h_n} = 0.
\end{equation}
Since $A$ is lower porous at~$0$, for every $n \in \mathbb N$ there is an interval $(t_n, p_n)$ such that
$$
(t_n, p_n) \subseteq (0, h_n) \cap (\mathbb R^+-A)
$$
and
\begin{equation}\label{t4.10e2}
\liminf_{n\to\infty} \frac{p_n-t_n}{h_n} \geq \underline{p}(A)>0.
\end{equation}
The inequality $p_n \geq p_n-t_n$ and~\eqref{t4.10e2} imply
\begin{equation}\label{t4.10e3}
\liminf_{n\to\infty} \frac{p_n}{h_n} \geq \underline{p}(A)>0.
\end{equation}
Using~\eqref{t4.10e1} we see that $p_n \in (s_n, h_n)$ for all sufficient large $n \in \mathbb N$. For every $n \in \mathbb N$ we write
$$
m_n:=\min\{t_n, s_n\}.
$$
Then, for all sufficiently large $n$, we have $m_n < h_n$ and
$$
(m_n, p_n) \subseteq (\mathbb R^+ - (A\cup B)).
$$
From~\eqref{t4.10e1} --- \eqref{t4.10e3} it follows that
\begin{multline}\label{t4.10e4}
\overline{p}(A\cup A) \geq \limsup_{n\to\infty} \frac{p_n - m_n}{h_n} \geq \liminf_{n\to\infty} \frac{p_n - m_n}{h_n} \\
\geq \min\left\{\liminf_{n\to\infty} \frac{p_n - s_n}{h_n}, \liminf_{n\to\infty} \frac{p_n - t_n}{h_n}\right\}\\
\geq \min\left\{\liminf_{n\to\infty} \frac{p_n}{h_n}, \underline{p}(A)\right\} \geq \min\{\underline{p}(A), \underline{p}(A)\} = \underline{p}(A) > 0.
\end{multline}
Thus $A \cup B$ is porous at~$0$.

$(ii) \Rightarrow (i)$ Suppose now that $A \cup B$ is porous at~$0$ for every strongly porous at~$0$ set $B \subseteq \mathbb R^+$. We must prove that $A$ is lower porous at~$0$.

Suppose, towards a contradiction, that $A$ is lower nonporous at~$0$. In this case by there are some sequences $((a_n, b_n))_{n \in \mathbb N}$ and $\tilde{q}=(q_n)_{n \in \mathbb N}$ which satisfy conditions~$(i)$, $(ii)$ and $(iii)$ from Lemma~\ref{l4.9}. In particular we have that the set $(\mathbb R^+ - A(\tilde{\Gamma}, \tilde{q}))$ is strongly porous at~$0$ with
$$
\tilde{\Gamma}= (A \cap (a_n, b_n))_{n \in \mathbb N} \text{ and } \lim_{n\to \infty} q_n = 1.
$$
Consequently the set
$$
D:=A \cup (\mathbb R^+ - A(\tilde{\Gamma}, \tilde{q}))
$$
is porous at~$0$. By Lemma~\ref{l4.7} there is $q>1$ such that the $q$-blow up $D(q)$ is also porous at~$0$. It is clear that
$$
D(q) = A(q) \cup (\mathbb R^+ - A(\tilde{\Gamma}, \tilde{q}))(q) \supseteq A(q) \cup \bigl((\mathbb R^+ - A(\tilde{\Gamma}, \tilde{q})) - \{0\}\bigr).
$$
Hence the inclusion
$$
D(q) \cap (0, t) \supseteq A(q) \cup (\mathbb R^+ - A(\tilde{\Gamma}, \tilde{q})) \cap (0,t)
$$
holds for every $t>0$. The last formula can be written as
\begin{equation}\label{t4.10e5}
D(q) \cap (0, t) \supseteq \bigl((A(q) \cap (0,t)) \cup ((0,t) - A(\tilde{\Gamma}, \tilde{q}))\bigr).
\end{equation}
Since $\lim_{n\to \infty} q_n =1$, there exists $n_0 \in \mathbb N$ such that
\begin{equation}\label{t4.10e6}
q_n < q
\end{equation}
for every $n \geq n_0$. Write
$$
t^*:= \sup \left\{x\colon x \in A \cap\bigcup_{n=1}^\infty (a_n, b_n)\right\}.
$$
Since~\eqref{t4.10e6} holds for every $n \geq n_0$, we obtain
$$
(0, t^*) \cap A(q) \supseteq (0, t^*) \cap A(\tilde{\Gamma}, \tilde{q}).
$$
The last inclusion and~\eqref{t4.10e5} with $t=t^*$ imply
$$
D(q) \cap (0, t^*) \supseteq \bigl((0,t^*)\cap (A(q)) \cup ((0,t^*) - A(\tilde{\Gamma}, \tilde{q}))\bigr) = (0, t^*).
$$
Since $D(q)$ is porous at~$0$, the open interval $(0, t^*)$ is also porous at~$0$. The last statement is obviously false.
\end{proof}

Let us denote by $\mathcal{LP}$ the set of all lower porous at~$0$ subsets of $\mathbb R^+$. Write $M(\mathcal{LP})$ for the set of all $\mathcal{LP}$-maximal ideals and define an ideal $\hat{I}(\mathcal{LP})$ as
\begin{equation}\label{eq4.20}
\hat{I}(\mathcal{LP}) = \bigcap_{\mathbf{I} \in M(\mathcal{LP})}\mathbf{I}
\end{equation}
(See formula~\eqref{eq3.3}).

\begin{corollary}\label{c4.11}
The inclusion
$$
\hat{I}(\mathcal{LP}) \supseteq\hat{I}(\mathcal{SP})
$$
holds.
\end{corollary}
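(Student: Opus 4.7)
The plan is to deduce the inclusion from Lemma~\ref{l3.3} applied to $\mathcal{LP}$, combined with Theorem~\ref{t4.10} and Corollary~\ref{c3.3}. First I would check that Lemma~\ref{l3.3} is applicable to $\mathcal{LP}$: it requires that $\mathcal{LP}$ be closed under subsets and that $\mathbb R^+ = \bigcup_{A \in \mathcal{LP}} A$. Both facts are straightforward. If $F \subseteq A$, then every open subinterval of $(0,h)$ avoiding $A$ also avoids $F$, so $\lambda(F, h) \geq \lambda(A, h)$ and consequently $\underline{p}(F) \geq \underline{p}(A)$; hence $A \in \mathcal{LP}$ implies $F \in \mathcal{LP}$. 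For the second, any singleton $\{x\} \subseteq \mathbb R^+$ satisfies $\underline{p}(\{x\}) = 1$, so $\{x\} \in \mathcal{LP}$. Therefore Lemma~\ref{l3.3} gives the equivalence: $E \in \hat{I}(\mathcal{LP})$ if and only if $E \cup A \in \mathcal{LP}$ for every $A \in \mathcal{LP}$.

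Now let $E \in \hat{I}(\mathcal{SP})$ and let $A \in \mathcal{LP}$; it remains to prove that $E \cup A \in \mathcal{LP}$. By Theorem~\ref{t4.10}, this is equivalent to showing that $(E \cup A) \cup B$ is porous at~$0$ for every $B \in \mathcal{SP}$. Fix such a $B$ and rewrite
$$
(E \cup A) \cup B = A \cup (E \cup B).
$$
Since $E \in \hat{I}(\mathcal{SP})$ and $B \in \mathcal{SP}$, Corollary~\ref{c3.3} yields $E \cup B \in \mathcal{SP}$. Applying the forward direction of Theorem~\ref{t4.10} to $A \in \mathcal{LP}$ with the strongly porous set $E \cup B$, we conclude that $A \cup (E \cup B)$ is porous at~$0$, as required.

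I do not anticipate a significant obstacle: the entire argument is a short chase through the equivalences already proved in the paper, where the crucial observation is that membership in $\hat{I}(\mathcal{SP})$ provides exactly the stability under union with elements of $\mathcal{SP}$ that allows one to apply Theorem~\ref{t4.10} twice (first in its reverse direction to reformulate lower porosity of $E\cup A$, then in its forward direction to verify the resulting condition). The only point requiring minor care is the verification of the hypotheses of Lemma~\ref{l3.3} for $\mathcal{LP}$, which is essentially trivial.
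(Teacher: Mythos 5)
Your proof is correct and follows essentially the same route as the paper: reduce membership in $\hat{I}(\mathcal{LP})$ to a union condition via Lemma~\ref{l3.3}, translate that condition through Theorem~\ref{t4.10}, reassociate the union, and apply Corollary~\ref{c3.3} together with a second application of Theorem~\ref{t4.10}. You are somewhat more explicit than the paper in verifying that $\mathcal{LP}$ satisfies the hypotheses of Lemma~\ref{l3.3} (closure under subsets and covering $\mathbb R^+$), which the paper takes for granted; otherwise the two arguments coincide up to renaming of variables.
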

\begin{proof}
Let $B \in \hat{I}(\mathcal{SP})$. We must show that $B \in \hat{I}(\mathcal{LP})$. Using Lemma~\ref{l3.3} we obtain that $B \in \hat{I}(\mathcal{LP})$ if and only if
\begin{equation}\label{c4.11e1}
B \cup E \in \mathcal{LP}
\end{equation}
for every $E \in \mathcal{LP}$. By Theorem~\ref{t4.10} statement~\eqref{c4.11e1} holds if and only if the set $(B \cup E) \cup S$ is porous at~$0$ for every $S \in \mathcal{SP}$. Corollary~\ref{c3.3} implies that
$$
B \cup S \in \mathcal{SP}
$$
for every $S \in \mathcal{SP}$. Using Theorem~\ref{t4.10} again we obtain $(i)$.
\end{proof}

Similarly to~\eqref{eq4.20} we can define the ideal
$$
\hat{I}(\mathcal{P}) = \bigcap_{\mathbf{I} \in M(\mathcal{P})} \mathbf{I},
$$
where $\mathcal{P}$ is the set of all porous at~$0$ subsets of $\mathbb R^+$ and $M(\mathcal{P})$ is the set of all $\mathcal{P}$-maximal ideals.

\begin{corollary}\label{c4.12}
The inclusion
$$
\hat{I}(\mathcal{LP}) \supseteq\hat{I}(\mathcal{P})
$$
holds.
\end{corollary}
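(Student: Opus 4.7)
The plan is to derive the inclusion directly from Lemma~\ref{l3.3} applied to both $\mathcal{P}$ and $\mathcal{LP}$, combined with two applications of Theorem~\ref{t4.10}. Before invoking Lemma~\ref{l3.3}, I would first verify its hypotheses for both $\mathcal{P}$ and $\mathcal{LP}$: each is closed under subsets because $A \subseteq B$ implies $\lambda(A,h) \geq \lambda(B,h)$ and hence $\overline{p}(A) \geq \overline{p}(B)$, $\underline{p}(A) \geq \underline{p}(B)$; each also covers $\mathbb{R}^+$ since every singleton $\{x\} \subseteq \mathbb{R}^+$ has $\underline{p}(\{x\})=1$, so $\{x\}\in\mathcal{LP}\subseteq\mathcal{P}$. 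Granted this, Lemma~\ref{l3.3} characterizes membership in $\hat{I}(\mathcal{P})$ (respectively $\hat{I}(\mathcal{LP})$) as the property that the union with any porous (respectively lower porous) set stays in that class.

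With this setup in hand, I would take an arbitrary $B \in \hat{I}(\mathcal{P})$ and aim to show $B \in \hat{I}(\mathcal{LP})$. By Lemma~\ref{l3.3} this reduces to checking that $B \cup E \in \mathcal{LP}$ for every $E \in \mathcal{LP}$. Now Theorem~\ref{t4.10} translates ``$B \cup E \in \mathcal{LP}$'' into the requirement that $(B \cup E) \cup S \in \mathcal{P}$ for every $S \in \mathcal{SP}$. The key observation is that $E \in \mathcal{LP}$ together with $S \in \mathcal{SP}$ already yields, by Theorem~\ref{t4.10} applied to $E$, that $E \cup S \in \mathcal{P}$. Rewriting $(B \cup E) \cup S = B \cup (E \cup S)$ and using $B \in \hat{I}(\mathcal{P})$ (via Lemma~\ref{l3.3}) to absorb unions with arbitrary porous sets gives $B \cup (E \cup S) \in \mathcal{P}$, which is what was needed.

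I do not anticipate a serious obstacle: the argument is just a sandwich of two applications of Theorem~\ref{t4.10} around the defining property of $\hat{I}(\mathcal{P})$, with the routine check that $\mathcal{P}$ and $\mathcal{LP}$ fit the framework of Lemma~\ref{l3.3}. The only point requiring care is keeping the two directions of Theorem~\ref{t4.10} straight: first to convert the $\mathcal{LP}$-target into a $\mathcal{P}$-test against $\mathcal{SP}$-sets, then to produce the porous intermediary $E \cup S$ from the lower porosity of $E$.
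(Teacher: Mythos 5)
Your proof is correct and follows essentially the same route as the paper's: reduce membership in $\hat{I}(\mathcal{LP})$ to the union condition via Lemma~\ref{l3.3}, translate that condition through Theorem~\ref{t4.10} into a $\mathcal{P}$-statement involving $S\in\mathcal{SP}$, obtain $E\cup S\in\mathcal{P}$ from Theorem~\ref{t4.10} applied to $E$, and absorb $B$ using Lemma~\ref{l3.3} for $\hat{I}(\mathcal{P})$. The only difference is that you explicitly verify the hypotheses of Lemma~\ref{l3.3} for $\mathcal{P}$ and $\mathcal{LP}$, which the paper leaves implicit.
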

\begin{proof}
Let $B \in \hat{I}(\mathcal{P})$. As in the proof of Corollary~\ref{c4.11} we see that $B \in \hat{I}(\mathcal{LP})$ if and only if
\begin{equation}\label{c4.12e1}
B \cup E \cup S \in \mathcal{P}
\end{equation}
holds for every $E \in \mathcal{LP}$ and every $S \in \mathcal{SP}$. Theorem~\ref{t4.10} implies that $E \cup S \in \mathcal{P}$ if $E \in \mathcal{LP}$ and $S \in \mathcal{SP}$. Using Lemma~\ref{l3.3} we obtain~\eqref{c4.12e1}.
\end{proof}

The following example shows that
\begin{equation}\label{eq4.22}
\hat{I}(\mathcal{P}) - \hat{I}(\mathcal{SP}) \neq \varnothing.
\end{equation}

\begin{example}\label{ex4.13}
Let $s \in (0,1)$ and let $A$ be the set of all elements of the sequence $(s^n)_{n\in \mathbb N}$. The set $A$ is not strongly porous at~$0$. Hence $A \notin \hat{I}(\mathcal{SP})$ holds. To see that $A \in \hat{I}(\mathcal{P})$, it suffices to show that
\begin{equation}\label{ex4.13e1}
A \cup E \in \mathcal{P}
\end{equation}
for every $E \in \mathcal{P}$. If $0 \notin \overline{E-\{0\}}$, then~\eqref{ex4.13e1} is trivial. Suppose that $0 \in \overline{E-\{0\}}$.

Let $((a_n, b_n))_{n \in \mathbb N}$ be sequence of open intervals such that
$$
b_{n+1} < a_n < b_n \text{ and } (a_n, b_n) \subseteq \mathbb R^+-E
$$
holds for every $n \in \mathbb N$ and
$$
\limsup_{n\to \infty} \frac{b_n - a_n}{b_n} = \overline{p}(E)>0.
$$
There are several cases to consider:
\begin{enumerate}
\item[$(i_1)$] there exists a subsequence $((a_{n_k}, b_{n_k}))_{k \in \mathbb N}$ of $((a_n, b_n))_{n \in \mathbb N}$ such that $A \cap (a_{n_k}, b_{n_k}) = \varnothing$ for every $k \in \mathbb N$;
\item[$(i_2)$] there is $((a_{n_k}, b_{n_k}))_{k \in \mathbb N}$ such that $\mathrm{card}(A \cap (a_{n_k}, b_{n_k}))=1$ for every $k \in \mathbb N$;
\item[$(i_3)$] there is $((a_{n_k}, b_{n_k}))_{k \in \mathbb N}$ with $\mathrm{card}(A \cap (a_{n_k}, b_{n_k})) \geq 2$ for every $k \in \mathbb N$.
\end{enumerate}
If $(i_1)$ holds, then it is clear that
$$
\overline{p}(A \cup E) = \overline{p}(E)>0.
$$
Condition $(i_2)$ implies
$$
\lambda(A \cup E, b_{n_k}) \geq \frac{1}{2} (b_{n_k}-a_{n_k})
$$
for every $k \in \mathbb N$. Hence we have
$$
\overline{p}(A \cup E) \geq \limsup_{k\to \infty} \frac{1}{2}\frac{b_{n_k}-a_{n_k}}{b_{n_k}} = \frac{1}{2}\overline{p}(E)>0.
$$
If we have $(i_3)$, then for every $k \in \mathbb N$ there is $m(k) \in \mathbb N$ such that
$$
[s^{m(k)+1}, s^{m(k)}] \subseteq (a_{n_k}, b_{n_k}).
$$
Consequently
$$
\overline{p}(A \cup E) \geq \limsup_{k\to \infty}\frac{s^{m(k)}-s^{m(k)+1}}{s^{m(k)}} = 1-s>0.
$$
Statement~\eqref{ex4.13e1} follows.
\end{example}

\begin{remark}\label{r4.14}
The set $E$ from Example~\ref{ex3.6} belongs to $\hat{I}(\mathcal{SP})$. It is easy to see that $E \notin \hat{I}(\mathcal{P})$. Indeed we evidently have $E \cup (\mathbb R^+ - E)= \mathbb R^+$. Moreover, since
$$
E =\bigcup_{n=1}^{\infty} (q^{-1}x_n, qx_n), \quad \text{with}\quad q \in (1,\infty),
$$
we have
\begin{multline*}
\overline{p}(\mathbb R^+- E) \geq \limsup_{n\to \infty} \frac{\lambda((\mathbb R^+- E), qx_n)}{qx_n} \\
\geq \limsup_{n\to \infty} \frac{qx_n - q^{-1}x_n}{qx_n} = 1-q^{-2}>0.
\end{multline*}
Thus $\hat{I}(\mathcal{SP}) - \hat{I}(\mathcal{P}) \neq \varnothing$.

It seems to be interesting to find a nonempty set $E \subseteq \mathbb R^+$ such that
$$
E \in (\hat{I}(\mathcal{LP}) - \hat{I}(\mathcal{P}))
$$
or to prove the equality
$$
\hat{I}(\mathcal{LP}) = \hat{I}(\mathcal{P}).
$$

\end{remark}


\begin{thebibliography}{99}
\bibitem{ADK}
M. Altinok, O. Dovgoshey, M. K\"{u}\c{c}\"{u}kaslan, \textit{Lokal one-sided porosity and pretangent spaces}. Analysis, DOI: 10.1515/analy-2015-011 (in print).

\bibitem{BD}
V. Bilet, O. Dovgoshey, \textit{Investigation of strong right upper porosity at a point}. Real. Anal. Exch., \textbf{39} (2013--2014), 175--206.

\bibitem{bilet}
V. Bilet, O. Dovgoshey, \textit{Boundedness of pretangent spaces to general metric spaces}, Ann. Acad. Sci. Fenn. Math., \textbf{39(1)} (2014), 73--82.

\bibitem{BDP}
V. Bilet, O. Dovgoshey, J. Prestin, \textit{Two ideals connected with strong right upper porosity at a point}. Czech. Math. J. , \textbf{65 (3)}, 2015, 713--737.

\bibitem{denjoy1} A. Denjoy, \textit{Sur une propri$\acute{e}$t$\acute{e}$ des s$\acute{e}$ries trigonom$\acute{e}$triques}, Verlag v.d. G. V. der Wie-en Natuur. Afd.,\textbf{29} (1920), 220-232.

\bibitem{denjoy2} A. Denjoy, \textit{Le\c{c}ons sur le calcul des cofficients d'une s$\acute{e}$rie trigonom$\acute{e}$trique, Part II, M$\acute{e}$trique et topologie d'ensembles parfaits et de fonctions}, Gauthier-Villars, Paris, 1941.

\bibitem{dolzenko} E. P. Dol$\check{z}$enko, \textit{Boundary properties of arbitrary functions}, (Russian), Izv. Akad. Nauk SSSR Ser. Math. \textbf{31} (1967), 3-14.

\bibitem{dovgoshey}
O. Dovgoshey and J. Riihentaus, \textit{Mean value type inequalities for quasinearly subharmonic functions}, Glasgow Math. J., \textbf{55(2)} (2013), 349-368.

\bibitem{EFH} P. Erd\"{o}s, S. Fajtlowicz, A. J. Hoffman, \textit{Maximum degree in graphs of diameter $2$}, Networks \textbf{10} (1980), 87--90.

\bibitem{karp}
L. Karp, T. Kilpenl$\ddot{a}$inen, A. Petrosyan and H. Shahgholian, \textit{On the porosity of free boundaries in degenerate variational inequalities}, J. Differential Equations, \textbf{164} (2000), 110-117.

\bibitem{khintchine}
A. Khintchine, \textit{An investigation of the structure of measurable functions}, (Russian), Mat. Sbornik, \textbf{31} (1924), 265-285.

\bibitem{przytycki}
F. Przytycki and S. Rohde, \textit{Porosity of Collet-Eckmann Julia sets}, Fund. Math., \textbf{155} (1998), 189-199.

\bibitem{Re}
D. L. Renfro, M. Balcerzak and W. Wojdowski, \textit{On unions of porous sets}. Real Analysis Exchange, \textbf{21} (2), 1995--1996, 782--788.

\bibitem{vaisala}
J. V$\ddot{a}$is$\ddot{a}$l$\ddot{a}$, \textit{Porous sets and quasisymmetric maps}, Trans. Amer. Math. Soc., \textbf{299} (1987), 525-533.

\bibitem{thomson}
B. S. Thomson, \textit{Real Functions}, Lecture Notes in Mathematics, \textbf{1170}, Springer-Verlag, Berlin, Heidelberg, New York, Tokyo, 1985.

\end{thebibliography}
\end{document}